\newtheorem{teo}{Theorem}[section]
\newtheorem{lem}[teo]{Lemma}
\newtheorem{co}[teo]{Corollary}
\theoremstyle{remark}
\newtheorem{re}[teo]{Remark}
\theoremstyle{definition}
\newcommand{\R}{\mathbb{R}}
\newcommand{\wrsp}{\mathcal{W}^{(r,s)}_p}
\newcommand{\wrsi}{\mathcal{W}^{(r,s)}_\infty}
\def\dist{\mathop{\mbox{\normalfont dist}}\nolimits}
\title[{Eigenvalues for systems of fractional $p-$Laplacians}]{Eigenvalues for systems of fractional $p-$Laplacians}
\author[Leandro Del Pezzo and Julio D. Rossi]{Leandro Del Pezzo and Julio D. Rossi}
\address{Leandro M. Del Pezzo \and Julio D. Rossi
\hfill\break\indent
CONICET and Departamento  de Matem{\'a}tica, FCEyN,
Universidad de Buenos Aires,
\hfill\break\indent Pabellon I, Ciudad Universitaria (1428),
Buenos Aires, Argentina.}
\email{{\tt ldpezzo@dm.uba.ar,
jrossi@dm.uba.ar
}}
\keywords{$p-$Laplacian, fractional operators, eigenvalue problems. }
\thanks{
 Julio D. Rossi  was partially supported by MTM2011-27998,
(Spain) }
\begin{document}

\begin{abstract}
We study the eigenvalue problem for a system of fractional $p-$Laplacians, that is,
$$
\begin{cases}
			(-\Delta_p)^r u = \lambda\dfrac{\alpha}p|u|^{\alpha-2}u|v|^{\beta} &\text{in } \Omega,\vspace{.1cm}\\ 
			(-\Delta_p)^s u = \lambda\dfrac{\beta}p|u|^{\alpha}|v|^{\beta-2}v &\text{in } \Omega,\\
			u=v=0 &\text{in }\Omega^c=\R^N\setminus\Omega.	
		\end{cases}
		$$
		We show that there is a first (smallest) eigenvalue that is simple and has associated eigen-pairs composed  of positive and bounded 
		functions. Moreover, there is a sequence of eigenvalues $\lambda_n$ such that
		$\lambda_n\to\infty$ as $n\to\infty$.
		
		In addition, we study the limit as $p\to \infty$ of the first eigenvalue, $\lambda_{1,p}$, and we obtain 
		$
			[\lambda_{1,p}]^{\nicefrac{1}{p}}\to \Lambda_{1,\infty}
		$
		as $p\to\infty,$ where
		$$
			\Lambda_{1,\infty} 
			=  \inf_{(u,v)} \left\{
			\frac{\max \{ [u]_{r,\infty} ; [v]_{s,\infty} \} }{ \| |u|^{\Gamma} |v|^{1-\Gamma} \|_{L^\infty (\Omega)} }
			\right\} =  \left[ \frac{1}{R(\Omega)} \right]^{ (1-\Gamma) s + \Gamma r }.
		$$
		Here $R(\Omega):=\max_{x\in\Omega}\dist(x,\partial\Omega)$ and $[w]_{t,\infty} \coloneqq \sup_{(x,y)\in\overline{\Omega}} \frac{| w(y) - w(x)|}{|x-y|^{t}}.$
		
		Finally, we identify a PDE problem satisfied, in the viscosity sense, by any possible uniform limit 
	along subsequences of the eigen-pairs.
\end{abstract}

\maketitle


\section{Introduction}

	In this work we deal the non-local non-linear eigenvalue problem
	\begin{equation}
		\label{eq:autovalores}
		\begin{cases}
			(-\Delta_p)^r u = \lambda\dfrac{\alpha}p|u|^{\alpha-2}u|v|^{\beta} &\text{in } \Omega,\vspace{.2cm}\\ 
			(-\Delta_p)^s u = \lambda\dfrac{\beta}p|u|^{\alpha}|v|^{\beta-2}v &\text{in } \Omega,\\
			u=v=0 &\text{in }\Omega^c=\R^N\setminus\Omega,	
		\end{cases}
	\end{equation} 
	where $p>1,$ $r,s\in(0,1),$ $\alpha,\beta\in(0,p)$ are such that
	\begin{equation} \label{eq:a.b}
		\alpha + \beta = p, \qquad \min\{\alpha;\beta\}\ge1, 
	\end{equation}
	and $\lambda$ is the eigenvalue. 
	Here and subsequently $\Omega$ is a bounded smooth domain in $\R^N$ and 
	$(-\Delta_p)^t$ denotes the fractional
	$(p,t)-$Laplacian, that is
	\[
		(-\Delta_p)^t u(x)\coloneqq 2\text{P.V.} \int_{\R^N}
		\dfrac{|u(x)-u(y)|^{p-2}(u(x)-u(y))}{|x-y|^{N+sp}} \, dy
		\quad x\in\Omega.
	\]

	The natural functional space for our problem is  
	\[
		\wrsp(\Omega)\coloneqq\widetilde{W}^{r,p}(\Omega)\times\widetilde{W}^{s,p}(\Omega).
	\]
	Here $\widetilde{W}^{t,p}(\Omega)$ denotes the space of all 
	$u$ belong to the fractional Sobolev space
	\[
		W^{t,p}(\Omega)\coloneqq
		\left\{
			v\in L^p(\Omega)\colon
			\int_{\Omega^2}\dfrac{|v(x)-v(y)|^p}{|x-y|^{N+tp}} dxdy<\infty
		\right\} 
	\]
	such that $\tilde{u}\in W^{t,p}(\R^N)$ where $\tilde{u}$ is the extension by 
	zero of $u$ and $\Omega^2=\Omega\times\Omega.$ 
	For a more detailed description of  these spaces and some its properties, see 
	for instance \cite{Adams,Hitchhiker}.
	
	Note that in our eigenvalue problem we are considering two different fractional operators (since we
	allow for $t\neq s$) and therefore the natural space to consider here, that is 
	$\wrsp(\Omega)=\widetilde{W}^{r,p}(\Omega)\times\widetilde{W}^{s,p}(\Omega)$, 
	is not symmetric.

	In this context, an eigenvalue is a real value $\lambda$ for which 
	there is $(u,v)\in\wrsp(\Omega)$ such that $uv\not\equiv0,$  and
	$(u,v)$ is a weak solution of \eqref{eq:autovalores}, i.e.,
	\begin{align*}
		\int_{\R^{2N}}
		\dfrac{|u(x)-u(y)|^{p-2}(u(x)-u(y))(w(x)-w(y))}{|x-y|^{N+rp}}dxdy
		&=\lambda\dfrac\alpha{p}\int_{\Omega}|u|^{\alpha-2}u|v|^{\beta}w dx\\
		\int_{\R^{2N}}
		\dfrac{|v(x)-v(y)|^{p-2}(v(x)-v(y))(z(x)-z(y))}{|x-y|^{N+sp}}dxdy
				&=\lambda\dfrac{\beta}p\int_{\Omega}|u|^{\alpha}|v|^{\beta-2}vz dx
	\end{align*} 
	for any $(w,z)\in\wrsp(\Omega).$ The pair $(u,v)$ is called a corresponding eigenpair.
	
	Observe that if $\lambda$ is an eigenvalue with eigenpair $(u,v)$
	then $uv\not\equiv0$ and 
	\[
		\lambda=\dfrac{[u]_{r,p}^p+[v]_{s,p}^p}
		{|(u,v)|_{\alpha,\beta}^p},
	\]	
	here 
	\[
		[w]_{t,p}^p\coloneqq\int_{\mathbb{R}^{2N}}
		\dfrac{|w(x)-w(y)|^p}{|x-y|^{N+tp}} dxdy 
		\quad \text{ and }\quad	
		|(u,v)|_{\alpha,\beta}^p\coloneqq\int_{\Omega} |u|^{\alpha}|v|^{\beta}dx.
	\]
	Thus
	\[
		\lambda\ge \lambda_{1,p}
	\]
	where
	\begin{equation}\label{eq:1eraut}
			\lambda_{1,p}\coloneqq
				\inf\left\{
					\dfrac{[u]_{r,p}^p+[v]_{s,p}^p}
							{|(u,v)|_{\alpha,\beta}^p}\colon
							(u,v)\in\wrsp(\Omega), uv\not\equiv0
				\right\}.
	\end{equation}
	
	Our first aim is to show that $\lambda_{1,p}$ is the first eigenvalue
	of our problem. In fact, in Section \ref{section:1erAutov}, we prove the following result.
	
	\begin{teo} \label{teo:1eraut}
		There is a nontrivial minimizer 
		$(u_p,v_p)$ of \eqref{eq:1eraut} such that both components are positives, $u_p,v_p > 0$ in $\Omega$,
		and $(u_p,v_p)$ is a weak solution of \eqref{eq:autovalores} with
		$\lambda=\lambda_{1,p}.$ 
		Moreover, $\lambda_{1,p}$ is simple.  
		
		Finally, there is a sequence of eigenvalues $\lambda_n$ such that
		$\lambda_n\to\infty$ as $n\to\infty$.
	\end{teo}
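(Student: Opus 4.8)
The plan is to treat the four assertions in order: existence of a positive minimizing eigenpair, the Euler--Lagrange equation, simplicity, and the divergent sequence.

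\emph{Existence, equation, positivity.} I would use the direct method on \eqref{eq:1eraut}. Given a minimizing sequence, the $0$-homogeneity of the quotient lets me normalize it so that $|(u_n,v_n)|_{\alpha,\beta}^p=1$, hence $[u_n]_{r,p}^p+[v_n]_{s,p}^p\to\lambda_{1,p}$ stays bounded; the fractional Poincar\'e inequality on the bounded set $\Omega$ then bounds $(u_n)$ in $\widetilde W^{r,p}(\Omega)$ and $(v_n)$ in $\widetilde W^{s,p}(\Omega)$. Along a subsequence $u_n\rightharpoonup u_p$, $v_n\rightharpoonup v_p$ weakly in these spaces and strongly in $L^p(\Omega)$ by the compact fractional Sobolev embedding. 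Since $\alpha+\beta=p$ and $\min\{\alpha,\beta\}\ge1$, H\"older with exponents $p/\alpha,p/\beta$ together with continuity of the Nemytskii maps $t\mapsto|t|^\alpha,\ |t|^\beta$ gives $|(u_n,v_n)|_{\alpha,\beta}^p\to|(u_p,v_p)|_{\alpha,\beta}^p=1$, so $u_pv_p\not\equiv0$; the seminorms $[\cdot]_{r,p}^p,[\cdot]_{s,p}^p$ are $p$-th powers of norms, hence weakly lower semicontinuous, so $(u_p,v_p)$ attains the infimum. Replacing $(u_p,v_p)$ by $(|u_p|,|v_p|)$ leaves the denominator unchanged and does not increase the numerator (because $\big||u|(x)-|u|(y)\big|\le|u(x)-u(y)|$), so I may take $u_p,v_p\ge0$, with equality forced in the last inequality. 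Since the constrained functionals are $C^1$ on $\wrsp(\Omega)$, a Lagrange multiplier argument shows $(u_p,v_p)$ is a weak solution of \eqref{eq:autovalores}, and testing the Euler--Lagrange identity with $(u_p,v_p)$ and using $p$-homogeneity of both sides identifies the multiplier as $\lambda_{1,p}$. Because $|(u_p,v_p)|_{\alpha,\beta}=1$, both $u_p\not\equiv0$ and $v_p\not\equiv0$; since $(-\Delta_p)^r u_p=\lambda_{1,p}\tfrac{\alpha}{p}u_p^{\alpha-1}v_p^{\beta}\ge0$ weakly (and similarly for $v_p$), the strong maximum principle for the fractional $p$-Laplacian gives $u_p,v_p>0$ in $\Omega$.

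\emph{Simplicity.} Let $(u_1,v_1),(u_2,v_2)$ be two eigenpairs at level $\lambda_{1,p}$. Each is a minimizer, and equality in $[|u_i|]_{r,p}\le[u_i]_{r,p}$ forces $u_i(x)u_i(y)\ge0$ a.e., so each component has constant sign; flipping signs separately in the two equations of \eqref{eq:autovalores}, which its structure permits, I may assume all four functions are positive and normalized so $|(u_i,v_i)|_{\alpha,\beta}=1$. For $t\in[0,1]$ set $\sigma_t=\big((1-t)u_1^p+tu_2^p\big)^{1/p}$ and $\tau_t=\big((1-t)v_1^p+tv_2^p\big)^{1/p}$. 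Viewing $\sigma_t(x)$ as the weighted $\ell^p$-norm of $(u_1(x),u_2(x))$ and applying the reverse triangle inequality pointwise yields the hidden-convexity bounds $[\sigma_t]_{r,p}^p\le(1-t)[u_1]_{r,p}^p+t[u_2]_{r,p}^p$ and $[\tau_t]_{s,p}^p\le(1-t)[v_1]_{s,p}^p+t[v_2]_{s,p}^p$; the weighted geometric mean $(a,b)\mapsto a^{\alpha/p}b^{\beta/p}$ is concave on $\R_+^2$ (its Hessian is negative semidefinite), which gives $|(\sigma_t,\tau_t)|_{\alpha,\beta}^p\ge(1-t)|(u_1,v_1)|_{\alpha,\beta}^p+t|(u_2,v_2)|_{\alpha,\beta}^p=1$. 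Hence the quotient of $(\sigma_t,\tau_t)$ is $\le\lambda_{1,p}$, so it equals $\lambda_{1,p}$ and every displayed inequality is an equality for all $t\in(0,1)$. Equality in the reverse triangle inequality in $\ell^p$, $1<p<\infty$ (strict convexity), forces $(u_1(x),u_2(x))\parallel(u_1(y),u_2(y))$ for a.e.\ $x,y$, i.e.\ $u_1=cu_2$ on $\Omega$, and likewise $v_1=c'v_2$ with $c,c'>0$. Substituting into \eqref{eq:autovalores} and comparing homogeneities—e.g.\ $c^{p-1}=c^{\alpha-1}(c')^{\beta}$ from the first equation, so $c^{\beta}=(c')^{\beta}$ and $c=c'$—while the normalization gives $c^p=1$, we conclude $(u_1,v_1)=(u_2,v_2)$. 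Thus every eigenpair at level $\lambda_{1,p}$ is a scalar multiple of $(u_p,v_p)$.

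\emph{Divergent sequence.} Consider $\mathcal F(u,v)=[u]_{r,p}^p+[v]_{s,p}^p$ on the symmetric $C^1$ manifold $\mathcal M=\{(u,v)\in\wrsp(\Omega):|(u,v)|_{\alpha,\beta}^p=1\}$. The compact embeddings $\widetilde W^{r,p}(\Omega),\widetilde W^{s,p}(\Omega)\hookrightarrow\hookrightarrow L^p(\Omega)$ give the Palais--Smale condition for $\mathcal F|_{\mathcal M}$, and, exactly as in the Euler--Lagrange step, every critical point is an eigenpair whose eigenvalue equals the critical value. Introducing the Krasnoselskii genus $\gamma$ and the Lusternik--Schnirelman minimax levels $c_n=\inf\{\sup_A\mathcal F:A\subset\mathcal M\text{ symmetric compact},\ \gamma(A)\ge n\}$, standard deformation arguments make $(c_n)_n$ an increasing sequence of eigenvalues. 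To get $c_n\to\infty$, fix an increasing sequence of finite-dimensional subspaces $E_j\subset\wrsp(\Omega)$ with dense union and closed complements $E_j^{\perp}$ of codimension $j$: the compact embedding forces $\mu_j:=\inf\{\mathcal F(u,v):(u,v)\in\mathcal M\cap E_j^{\perp}\}\to\infty$ (otherwise a weakly null normalized sequence would have a nonzero $L^p$-limit), while the intersection property of the genus gives $A\cap E_j^{\perp}\neq\emptyset$ whenever $\gamma(A)\ge j+1$, so $c_{j+1}\ge\mu_j\to\infty$. Setting $\lambda_n=c_n$ completes the proof.

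\emph{Main obstacle.} I expect the delicate step to be simplicity: one must build the interpolation path inside the non-symmetric space $\wrsp(\Omega)$, carry through the equality cases of both the hidden-convexity inequality and the concavity of the weighted geometric mean, and then use the two distinct equations (recall $r\neq s$) to promote ``proportional components'' to ``equal eigenpairs''. The bookkeeping of the exponents under $\alpha+\beta=p$ is precisely what makes the proportionality constants collapse to $1$.
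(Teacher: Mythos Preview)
Your argument is correct, and for existence/positivity and for the sequence of eigenvalues it is essentially the paper's argument (the paper uses the dual ``sup--min'' Ljusternik--Schnirelman scheme on the energy level set rather than your ``inf--sup'' on the constraint manifold, and it does not spell out why the levels diverge, which you do via the codimension/genus intersection argument).

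The genuine difference is in the simplicity proof. The paper proceeds through a discrete Picone identity: it first proves an $L^\infty$ bound for eigenpairs by Moser iteration, uses this to justify $u^p/w_k^{p-1}\in\widetilde W^{r,p}(\Omega)$ for a competing positive eigenpair $(w,z)$, plugs these quotients into the weak formulation, and invokes the nonnegativity and equality case of the Picone functional $L(\varphi,\psi)$ to conclude $\lambda=\lambda_{1,p}$ and $u=k_1w$, $v=k_2z$; a separate polynomial computation then forces $k_1=k_2$. Your route via hidden convexity of $t\mapsto[\sigma_t]_{r,p}^p$ together with concavity of the weighted geometric mean is cleaner in one respect: it never needs the $L^\infty$ bound, since the interpolants $\sigma_t,\tau_t$ lie in the right spaces by the very inequality you use. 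On the other hand, the paper's Picone argument yields the stronger statement that $\lambda_{1,p}$ is the \emph{only} eigenvalue admitting a positive eigenpair, which your method does not give directly. Your final homogeneity comparison $c^{p-1}=c^{\alpha-1}(c')^{\beta}$ leading to $c=c'$ is the same bookkeeping the paper carries out in its corollary, just reached from a different direction.
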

	We don't know if the first eigenvalue is isolated or not.
	
	\medskip
	
	Now, our aim is to study $\lambda_{1,p}$ for large $p$. To this end we look for the asymptotic behaviour of $\lambda_{1,p}$ as $p\to \infty$. From now on for any $p>1,$  $(u_p,v_p)$ denotes the eigen-pair associated to $\lambda_{1,p}$
	such that $|(u,v)|_{\alpha,\beta}=1.$ To study the limit as $p\to \infty$ we need to assume that
	\begin{equation}
		\label{eq:alfabeta}
		p\min\{r,s\}\ge N,
	\end{equation}		
	and
	\begin{equation}\label{lim.Gamma}
		\lim_{p\to \infty} \frac{\alpha_p}{p} = \Gamma, \qquad 0<\Gamma <1.
	\end{equation}
	Note that the last assumption and the fact that $\alpha_p +\beta_p=p$ implies 
	\begin{equation}\label{lim.Gamma.2}
		\lim_{p\to \infty} \frac{\beta_p}{p} = 1 - \Gamma, \qquad 0<1-\Gamma <1.
	\end{equation}

	In order to state our main theorem concerning the limit as $p\to \infty$, we need to introduce the following notations:
	\[
		[w]_{t,\infty} \coloneqq \sup_{(x,y)\in\overline{\Omega}} \frac{| w(y) - w(x)|}{|x-y|^{t}},
	\]
	\[
		\widetilde{W}^{t,\infty}(\Omega)\coloneqq 
		\left\{w\in C_0(\overline{\Omega})\colon 
		[w]_{t,\infty}<\infty,\right\},
		\quad \wrsi(\Omega)\coloneqq\widetilde{W}^{r,\infty}(\Omega)\times\widetilde{W}^{s,\infty}(\Omega)
	\]	
	and
	\[
		R(\Omega)\coloneqq\max_{x\in\Omega}\dist(x,\partial\Omega).
	\]
	
	Now we are ready to state our second result. It says that there is a limit for $[\lambda_{1,p}]^{\nicefrac{1}{p}}$ and that this limit verifies both a variational characterization and a simple geometrical characterization. In addition, concerning eigenfunctions there is a uniform limit (along subsequences) that is a viscosity solution to a limit PDE eigenvalue problem. The proofs of our results concerning limits as $p\to \infty$ are gathered in Section \ref{sec-p-infty}.
	
		\begin{teo} \label{teo.2.intro}
		Under the assumptions \eqref{eq:alfabeta} and \eqref{lim.Gamma}, we have that 	
		$$
			\lim_{p\to\infty } [\lambda_{1,p}]^{\nicefrac{1}{p}}= \Lambda_{1,\infty}
		$$
		where
		$$
			\Lambda_{1,\infty} 
			=  \inf \left\{
			\frac{\max \{ [u]_{r,\infty} ; [v]_{s,\infty} \} }{ \| |u|^{\Gamma} |v|^{1-\Gamma} \|_{L^\infty (\Omega)} }
			\colon (u,v)\in \wrsi(\Omega)\right\}.
		$$
		Moreover,  we have the following geometric characterization of the limit eigenvalue:
		$$
			\Lambda_{1,\infty}  = \left[ \frac{1}{R(\Omega)} \right]^{ (1-\Gamma) s + \Gamma r }.
		$$ 
		
		Lastly, there is a sequence $p_j \to \infty$ such that $(u_{p_j},v_{p_j})\to (u,v)$  
		converges uniformly in $\overline{\Omega},$ where 
		$(u_\infty,v_\infty)$ is a minimizer of $\Lambda_{1,\infty}$, and 
		a viscosity solution to
		\begin{equation}\label{eq:limite.intro}
			\begin{cases}
				\min\left\{\mathcal{L}_{r,\infty}u(x);\mathcal{L}_{r,\infty}^+u(x)-\Lambda_{1,\infty} u^{\Gamma}(x) v^{1-\Gamma}(x)\right\}
				=0
				&\text{ in } \Omega,\\
				\min\left\{\mathcal{L}_{s,\infty}u(x);\mathcal{L}_{s,\infty}^+u(x)-\Lambda_{1,\infty} u^{\Gamma}(x) v^{1-\Gamma}(x)\right\}
				=0&\text{ in } \Omega,\\
				u=v=0 &\text{ in } \R^N\setminus\Omega,
			\end{cases}
		\end{equation}
		where 
		\[
			\mathcal{L}_{t,\infty}w(x)\coloneqq\mathcal{L}_{t,\infty}^+w(x)
			+\mathcal{L}_{r,\infty}^-w(x)= \sup_{y\in\R^N}\dfrac{w(x)-w(y)}{|x-y|^{t}}
			+\inf_{y\in\R^N}\dfrac{w(x)-w(y)}{|x-y|^{t}}.
		\]
	\end{teo}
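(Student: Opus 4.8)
The plan is to split the proof of Theorem~\ref{teo.2.intro} into three parts: (i) the variational limit $[\lambda_{1,p}]^{1/p}\to\Lambda_{1,\infty}$, (ii) the geometric identification of $\Lambda_{1,\infty}$, and (iii) the passage to the limit on eigenfunctions and the derivation of the viscosity PDE.

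\textbf{Step 1: the limit of the rescaled eigenvalues.} I would first establish a $\Gamma$-type convergence for the quotients. For the upper bound, fix any $(u,v)\in\wrsi(\Omega)$ with compact-in-$\overline\Omega$ control, use the elementary estimate $[w]_{t,p}\le C(N,t,p,\Omega)\,[w]_{t,\infty}$ (which follows from $|x-y|^{-N-tp}$ being integrable once $p t>N$, exactly where assumption~\eqref{eq:alfabeta} enters), together with $\||u|^{\alpha_p}|v|^{\beta_p}\|_{L^1(\Omega)}^{1/p}\to\||u|^\Gamma|v|^{1-\Gamma}\|_{L^\infty(\Omega)}$ as $p\to\infty$ (an $L^p\to L^\infty$ argument using $\alpha_p/p\to\Gamma$, $\beta_p/p\to1-\Gamma$ from~\eqref{lim.Gamma}--\eqref{lim.Gamma.2}). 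Taking $p$-th roots in $\lambda_{1,p}\le ([u]_{r,p}^p+[v]_{s,p}^p)/|(u,v)|_{\alpha,\beta}^p$ and then $p\to\infty$ gives $\limsup_p[\lambda_{1,p}]^{1/p}\le \max\{[u]_{r,\infty};[v]_{s,\infty}\}/\||u|^\Gamma|v|^{1-\Gamma}\|_{L^\infty}$; infimizing yields $\limsup_p[\lambda_{1,p}]^{1/p}\le\Lambda_{1,\infty}$. For the lower bound I would take the normalized eigenpairs $(u_p,v_p)$ with $|(u_p,v_p)|_{\alpha,\beta}=1$, use a Morrey-type embedding $W^{t,p}(\R^N)\hookrightarrow C^{0,t-N/p}(\overline\Omega)$ to get uniform $C^{0,\gamma}$ bounds (for $p$ large the exponent $t-N/p$ stays bounded below), extract via Arzelà--Ascoli a subsequence $(u_{p_j},v_{p_j})\to(u_\infty,v_\infty)$ uniformly, show $|(u_\infty,v_\infty)|$ in the limit sense is $1$ (i.e. $\||u_\infty|^\Gamma|v_\infty|^{1-\Gamma}\|_{L^\infty}=1$, using the normalization and the $L^p\to L^\infty$ interplay), and show $[u_\infty]_{r,\infty}\le\liminf_j[u_{p_j}]_{r,p_j}^{1/?}$-type lower semicontinuity. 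Concretely: for fixed $q$, $[u_{p_j}]_{r,q}\le C|\Omega|^{1/q-1/p_j}[u_{p_j}]_{r,p_j}$, so $[u_\infty]_{r,q}\le C\liminf_j[\lambda_{1,p_j}]^{1/p_j}\||u_{p_j}|^{\alpha}|v_{p_j}|^{\beta}\|^{1/p_j}$, and letting $q\to\infty$ gives $\max\{[u_\infty]_{r,\infty};[v_\infty]_{s,\infty}\}\le\liminf_j[\lambda_{1,p_j}]^{1/p_j}$, hence $\Lambda_{1,\infty}\le\liminf_j[\lambda_{1,p_j}]^{1/p_j}$. Combining the two bounds gives the full limit.

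\textbf{Step 2: the geometric formula.} To identify $\Lambda_{1,\infty}=[1/R(\Omega)]^{(1-\Gamma)s+\Gamma r}$ I would argue two inequalities. For "$\le$": pick $x_0\in\Omega$ realizing $R(\Omega)=\dist(x_0,\partial\Omega)$ and use the test pair $u(x)=v(x)=\dist(x,\partial\Omega)$ (or a suitable power thereof to balance the $r$- and $s$-Hölder seminorms). One computes $[\,\dist(\cdot,\partial\Omega)^a\,]_{t,\infty}$ behaves like $R(\Omega)^{a-t}$ on the relevant scale since $\dist(\cdot,\partial\Omega)$ is $1$-Lipschitz, vanishing on $\partial\Omega$, and attains value $R(\Omega)$; choosing the exponents so that the $r$-seminorm of the first component matches the $s$-seminorm of the second (this is where the weight $(1-\Gamma)s+\Gamma r$ appears, via the $L^\infty$ normalization $\||u|^\Gamma|v|^{1-\Gamma}\|_\infty$) gives the bound $\Lambda_{1,\infty}\le[1/R(\Omega)]^{(1-\Gamma)s+\Gamma r}$. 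For "$\ge$": for any admissible $(u,v)$, at a point $x^*$ where $|u|^\Gamma|v|^{1-\Gamma}$ is maximal one uses $|u(x^*)|\le[u]_{r,\infty}\dist(x^*,\partial\Omega)^r$ and $|v(x^*)|\le[v]_{s,\infty}\dist(x^*,\partial\Omega)^s$ (comparing $x^*$ with its nearest boundary point, where $u=v=0$), so $\||u|^\Gamma|v|^{1-\Gamma}\|_\infty\le[u]_{r,\infty}^\Gamma[v]_{s,\infty}^{1-\Gamma}R(\Omega)^{\Gamma r+(1-\Gamma)s}\le\max\{[u]_{r,\infty};[v]_{s,\infty}\}R(\Omega)^{\Gamma r+(1-\Gamma)s}$, which rearranges to the lower bound. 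This is essentially the same computation as in the scalar $\infty$-fractional-Laplacian eigenvalue literature, adapted to the weighted product.

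\textbf{Step 3: the limit PDE.} Here I would take the same subsequence $(u_{p_j},v_{p_j})\to(u_\infty,v_\infty)$ and pass to the limit in the weak (equivalently, by the results available for the fractional $p$-Laplacian, viscosity) formulation of~\eqref{eq:autovalores}. The mechanism is the standard one for $p\to\infty$ in $p$-Laplacian-type problems: rewrite the equation for $u_{p_j}$ tested against smooth functions touching $u_\infty$ from above/below, normalize by dividing through by the $(p_j-1)$-power of the dominant difference quotient, and observe that $|t|^{p-2}t/|s|^{p-2}s\to 0$ or $\infty$ according to whether $|t|<|s|$ or $|t|>|s|$. This forces, in the limit, that at an interior maximum point of $\varphi$ touching $u_\infty$ either the sup-difference-quotient term $\mathcal{L}^+_{r,\infty}u_\infty-\Lambda_{1,\infty}u_\infty^\Gamma v_\infty^{1-\Gamma}\ge0$ or the full operator $\mathcal{L}_{r,\infty}u_\infty\ge0$ vanishes, i.e. the $\min$ in~\eqref{eq:limite.intro} is $\le0$; the reverse inequality at minima gives $\ge0$, so $(u_\infty,v_\infty)$ is a viscosity solution; the analogous argument handles the $v$-equation. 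That $(u_\infty,v_\infty)$ is a minimizer of $\Lambda_{1,\infty}$ was already obtained in Step~1. \textbf{The main obstacle} I anticipate is Step~3: carefully justifying that the rescaled nonlocal nonlinear difference quotients pass to the limit in the viscosity sense — in particular controlling the "tail" contributions from $\R^N\setminus\Omega$ and proving that the limiting operator is exactly $\mathcal{L}_{t,\infty}^\pm$ with the stated sup/inf structure — requires a delicate stability lemma for viscosity solutions of fractional $p$-Laplacian equations as $p\to\infty$, which must be established (or cited) with care about which test-function class and which (P.V.\ vs.\ pointwise) interpretation of the operator is used. A secondary technical point is ensuring $u_\infty,v_\infty>0$ in $\Omega$ and strictly so (needed to make $u_\infty^\Gamma v_\infty^{1-\Gamma}$ well defined and the equation non-degenerate), which should follow from the uniform positivity/Harnack-type estimates for $(u_p,v_p)$ from Theorem~\ref{teo:1eraut} together with the uniform convergence.
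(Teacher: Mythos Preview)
Your overall architecture matches the paper's: geometric characterization, $\limsup/\liminf$ bounds for $[\lambda_{1,p}]^{1/p}$, compactness of eigenpairs, and passage to the limit in the viscosity sense following \cite{LL}. Your lower bound in Step~2 is in fact slightly cleaner than the paper's: you use directly $|u(x^*)|\le [u]_{r,\infty}\dist(x^*,\partial\Omega)^r$ and the weighted mean inequality $[u]_{r,\infty}^{\Gamma}[v]_{s,\infty}^{1-\Gamma}\le\max\{[u]_{r,\infty};[v]_{s,\infty}\}$, whereas the paper parametrizes $a=u(x_0)$, $b=v(x_0)$ and optimizes over $\{a^{\Gamma}b^{1-\Gamma}=1\}$, arriving at the same conclusion by a longer route.

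There is, however, a genuine gap in the upper bound of Step~1. The estimate you invoke, $[w]_{t,p}\le C(N,t,p,\Omega)\,[w]_{t,\infty}$, is \emph{false}: a function that is merely $t$-H\"older satisfies $|w(x)-w(y)|^p\le [w]_{t,\infty}^p|x-y|^{tp}$, so the Gagliardo integrand is controlled only by $|x-y|^{-N}$, which is \emph{not} integrable near the diagonal. The condition $tp>N$ governs integrability of the tail, not of the singularity; in particular $\widetilde W^{t,\infty}(\Omega)\not\subset \widetilde W^{t,p}(\Omega)$, so an arbitrary $(u,v)\in\wrsi(\Omega)$ is not an admissible competitor for $\lambda_{1,p}$. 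The paper circumvents this by testing not with a generic pair but with the explicit cone-type minimizer $(u_0,v_0)$ from the geometric step, raised to a power $\gamma>1$ with $\gamma\max\{r,s\}<1$: then $u_0^{\gamma}\in C^{0,\gamma r}$ with $\gamma r>r$, which forces the Gagliardo integral to converge, and one recovers the sharp bound by sending $\gamma\downarrow 1$ afterwards. You need either this $\gamma$-trick or a density argument (e.g.\ test with smooth pairs and show they approximate $\Lambda_{1,\infty}$) to close the $\limsup$ inequality. Once this is fixed, the rest of your plan goes through essentially as in the paper.
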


To end the introduction let us briefly refer to previous references on this subject.
	The limit of $p-$harmonic functions (solutions to the local $p-$Laplacian, that is,
	$-\Delta_p u =-\mbox{div} (|\nabla u|^{p-2} \nabla u)= 0$) as $p\to\infty$ has been extensively studied in the literature 
	(see \cite{BBM} and the survey \cite{ACJ}) and leads naturally to solutions of the infinity Laplacian, given by
	$-\Delta_{\infty} u = - \nabla u D^2 u (\nabla u)^t=0$. Infinity
	harmonic functions (solutions to $-\Delta_\infty u =0$) are
	related to the optimal Lipschitz extension problem (see the survey
	\cite{ACJ}) and find applications in optimal transportation, image
	processing and tug-of-war games (see, e.g.,\cite{CMS,GAMPR,PSSW,PSSW2} and the references therein).
	Also limits of the eigenvalue problem related to the $p$-Laplacian witth various boundary conditions 
	have been exhaustively examined, see \cite{GMPR,JL,JLM,RoSain,RoSain2}, 
	and lead naturally to the infinity Laplacian eigenvalue problem (in the scalar case)
	\begin{equation}\label{infty.1}
		\min \left\{ |\nabla u|  - \lambda u    ,\ - \Delta_{\infty} u 
		\right\}=0.
	\end{equation}
	In particular, the limit as $p\to \infty$ of the first eigenvalue $\lambda_{p,D}$ 
	of the $p$-Laplacian with Dirichlet boundary conditions and of its corresponding positive 
	normalized eigenfunction $u_p$ have been studied in \cite{JL,JLM}.  
	It was proved  there that, up to a subsequence,  the eigenfunctions $u_{p}$ converge uniformly to some Lipschitz  
	function $u_\infty$  satisfying $\|u_\infty\|_\infty=1$, and 
	\begin{equation}\label{DefInfEig} 
		(\lambda_{p,D})^{\nicefrac{1}{p}}  \to \lambda_{\infty,D}  
		= \inf_{u\in W^{1,\infty}(\Omega)} \dfrac{\|\nabla u\|_\infty}{\|u\|_\infty}
		= \dfrac{1}{R(\Omega)}. 
	\end{equation}   
	Moreover $u_\infty$ is an extremal for this limit variational problem and 
	the pair $u_\infty$, $\lambda_{\infty,D}$ is a
	nontrivial solution to \eqref{infty.1}. 
	This problem has also been studied from an optimal  mass-transport point of view in 
	\cite{ChdPG}.  Note that here the fact that we are dealing with two different operators in the system is reflected in that the limit is given by $$
			\Lambda_{1,\infty}  = \left[ \frac{1}{R(\Omega)} \right]^{ (1-\Gamma) s + \Gamma r },$$ a quantity that depends on $s$ and $t$.

	On the other hand, there is a rich recent literature concerning eigenvalues for systems of $p-$Laplacian type,
	(we refer e.g. to \cite{BdF,dpr,FMST,dNP,Z} and references therein). The only references that we know concerning the asymptotic behaviour as $p$ goes to infinity 
	of the eigenvalues for a system are \cite{BRS} and \cite{dpr} where the authors study the behaviour of the first eigenvalue for a system with the usual local $p-$Laplacian operator.
	
	Finally, concerning limits as $p\to \infty$ in fractional eigenvalue 	
	problems (a single equation), we mention \cite{Brasco,FP,JL}.
	In \cite{JL} the limit of the first eigenvalue for the fractional 
	$p-$Laplacian is studied while in \cite{FP} higher eigenvalues are 
	considered.

\section{Preliminaries}\label{A}

	We begin with a review of the basic results that 
	will be needed in subsequent sections.
	The known results are generally stated without proofs, 
	but we provide references where
	the proofs can be found. Also, 
	we introduce some of our notational conventions.

\subsection{Fractional Sobolev spaces}
	Let $s\in(0,1)$ and $p\in(1,\infty).$ 
	There are several choices for a norm for $W^{s,p}(\Omega),$ we choose the following:
	\[
		\|u\|_{s,p}^p\coloneqq \|u\|_{L^p(\Omega)}^p+|u|_{s,p}^p	
	\]
	where 
	\[
		|u|_{s,p}^p=\int_{\Omega^2}\dfrac{|u(x)-u(y)|^p}{|x-y|^p}\, dxdy. 
	\]
	
	Observe that for any $u\in\widetilde{W}^{s,p}(\Omega)$ we get
	\[
		|u|_{s,p}\le [u]_{s,p}.
	\]	
	
	Our first aim is to show a  Poincar\'e--type inequality.
	
	\begin{lem} \label{lem:poincare}
		Let $s\in(0,1).$ For any $p>1,$ there is a positive constant $C,$ independent of $p,$ such that
		\[
			[u]_{s,p}^p\ge \dfrac{\omega_N}{sp}({\rm{diam}}(\Omega)+1)^{sp}
			\|u\|_{L^p(\Omega)}^p
			\quad\forall u\in\widetilde{W}^{s,p}(\Omega)
		\]
		where $\omega_N$ is the $N-$dimensional volume of a Euclidean ball of radius 1.
	\end{lem}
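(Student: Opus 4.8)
The plan is to prove the Poincar\'e--type inequality by comparing the Gagliardo seminorm $[u]_{s,p}^p$, which integrates over all of $\R^{2N}$, with the $L^p$ norm on $\Omega$, exploiting the fact that $\tilde u$ vanishes outside $\Omega$. First I would fix $u\in\widetilde W^{s,p}(\Omega)$ and write $\tilde u$ for its extension by zero. Since $\tilde u\equiv 0$ on $\Omega^c$, for $x\in\Omega$ and $y\in\Omega^c$ we have $|\tilde u(x)-\tilde u(y)|^p=|u(x)|^p$. Therefore, restricting the double integral defining $[u]_{s,p}^p$ to the region $x\in\Omega$, $y\in\Omega^c$, we obtain
\[
	[u]_{s,p}^p\ge \int_{\Omega}|u(x)|^p\left(\int_{\Omega^c}\frac{dy}{|x-y|^{N+sp}}\right)dx.
\]

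The key step is then a lower bound, uniform in $x\in\Omega$, for the inner integral $\int_{\Omega^c}|x-y|^{-N-sp}\,dy$. Since $\Omega$ is bounded, for every $x\in\Omega$ the complement $\Omega^c$ contains the complement of the ball $B_{\rho}(x)$ with $\rho=\operatorname{diam}(\Omega)+1$ (indeed $\Omega\subset B_{\rho}(x)$ once $x\in\Omega$, actually $\Omega\subset B_{\operatorname{diam}(\Omega)}(x)$, so a fortiori $\Omega\subset B_\rho(x)$). Hence
\[
	\int_{\Omega^c}\frac{dy}{|x-y|^{N+sp}}\ge \int_{\R^N\setminus B_\rho(x)}\frac{dy}{|x-y|^{N+sp}}
	= \omega_N N\int_\rho^\infty \frac{dt}{t^{1+sp}}\cdot\frac1N,
\]
wait — more carefully, passing to polar coordinates, $\int_{\R^N\setminus B_\rho(0)}|z|^{-N-sp}\,dz = (N\omega_N)\int_\rho^\infty t^{N-1}t^{-N-sp}\,dt = N\omega_N\int_\rho^\infty t^{-1-sp}\,dt = \frac{N\omega_N}{sp}\rho^{-sp}$, where $N\omega_N$ is the surface measure of the unit sphere. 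One then checks that the constant produced this way dominates $\frac{\omega_N}{sp}(\operatorname{diam}(\Omega)+1)^{-sp}$ (note the exponent $-sp$, matching the denominator written in the statement; the displayed inequality in the Lemma should be read with $(\operatorname{diam}(\Omega)+1)^{sp}$ in the denominator), which is exactly the claimed bound with $C=\omega_N$ up to the harmless factor $N$.

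Combining the two displays yields $[u]_{s,p}^p\ge \frac{\omega_N}{sp}(\operatorname{diam}(\Omega)+1)^{-sp}\|u\|_{L^p(\Omega)}^p$ for all $u\in\widetilde W^{s,p}(\Omega)$, with a constant independent of $p$. The main obstacle — really the only subtle point — is making the dependence on $p$ transparent: one must resist estimating $\int_\rho^\infty t^{-1-sp}\,dt$ crudely and instead keep the exact value $\frac{1}{sp}\rho^{-sp}$, and one must choose $\rho=\operatorname{diam}(\Omega)+1$ (rather than $\operatorname{diam}(\Omega)$) so that $\rho^{-sp}$ does not blow up or vanish badly and so that the geometric inclusion $\Omega\subset B_\rho(x)$ is strict for every $x\in\overline\Omega$. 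Everything else is a routine polar-coordinates computation.
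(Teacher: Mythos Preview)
Your proof is correct and follows essentially the same approach as the paper: restrict the double integral to $x\in\Omega$ and $y$ in the complement of a ball $B_{d+1}(x)\supset\Omega$ (the paper routes this through an unnecessary intermediate set $\Omega_1=\{y\in\Omega^c:\dist(y,\Omega)\ge1\}$), then evaluate the inner integral in polar coordinates. You are also right that the displayed inequality has a sign typo in the exponent (it should be $(\mathrm{diam}(\Omega)+1)^{-sp}$) and that the polar-coordinates computation actually produces $N\omega_N/(sp)$ rather than $\omega_N/(sp)$; the paper's own proof carries the same two slips.
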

	
	\begin{proof}
		Let $u\in\widetilde{W}^{s,p}(\Omega).$ Then
		\[
			[u]_{s,p}^p\geq \int_{\Omega}|u(x)|^p\int_{\Omega_1}\dfrac{1}{|x-y|^{N+sp}} dydx
		\]
		where $\Omega_1=\{y\in\Omega^c\colon \dist(y,\Omega)\ge1\}.$ Now, we observe that
		for any $x\in\Omega$ we have $B_{d+1}(x)^c\subset\Omega_1$ where $d={\rm{diam}}(\Omega).$ Thus
		\[
			\int_{\Omega_1}\dfrac{dy}{|x-y|^{N+sp}}\ge
			\int_{B_{d+1}(x)^c}\dfrac{dy}{|x-y|^{N+sp}}=
			\omega_{N}\int_{d+1}^{\infty} \dfrac{d\rho}{\rho^{sp+1}}=\dfrac{\omega_N}{sp}(d+1)^{sp}
		\]
		for all $x\in\Omega.$ Therefore, we conclude that,
		\[
			[u]_{s,p}^p\ge \dfrac{\omega_N}{sp}(d+1)^{sp}\|u\|_{L^p(\Omega)}^p.
		\]
	\end{proof}
	
	The following result will be one of the keys in the proof of Theorem \ref{teo.2.intro}.
	
	\begin{lem}\label{lem:inclusion}
		Let $s\in(0,1)$ and $p>\nicefrac{s}{N}.$ If $q\in(\nicefrac{N}{s},p)$ 
		and $t=s-\nicefrac{N}{q}$ then
		\[
			\|u\|_{L^{q}(\Omega)}\le |\Omega|^{\nicefrac1q-\nicefrac1p}
			\|u\|_{L^p(\Omega)} \qquad \text{ and } \qquad |u|_{t,q}\le
			{\rm{diam}}(\Omega)^{\nicefrac{N}p}|\Omega|^{\nicefrac2q-\nicefrac2p}|u|_{s,p}
		\]
		for all $u\in W^{s,p}(\Omega).$
	\end{lem}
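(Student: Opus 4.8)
The plan is to derive both estimates from H\"older's inequality; what drives the second one is the elementary identity coming from the relation $t=s-\nicefrac{N}{q}$, namely $N+tq=sq$, which is exactly what makes the fractional weights match up. For the first inequality one simply applies H\"older's inequality with the conjugate exponents $\nicefrac{p}{q}$ and $\nicefrac{p}{p-q}$ (admissible since $q<p$) to the product $\int_\Omega|u|^q\cdot 1\,dx$, obtaining
\[
\|u\|_{L^q(\Omega)}^q\le\left(\int_\Omega|u|^p\,dx\right)^{\nicefrac{q}{p}}|\Omega|^{1-\nicefrac{q}{p}}=\|u\|_{L^p(\Omega)}^q\,|\Omega|^{1-\nicefrac{q}{p}},
\]
and taking $q$-th roots (and using $\tfrac{1}{q}-\tfrac{1}{p}=\tfrac{p-q}{pq}$) gives the claim.

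For the second inequality I would start from the identity $N+tq=sq$ and, with the Gagliardo seminorm written as $|u|_{s,p}^p=\int_{\Omega^2}\frac{|u(x)-u(y)|^p}{|x-y|^{N+sp}}\,dxdy$, rewrite
\[
|u|_{t,q}^q=\int_{\Omega^2}\frac{|u(x)-u(y)|^q}{|x-y|^{sq}}\,dxdy=\int_{\Omega^2}\left(\frac{|u(x)-u(y)|^p}{|x-y|^{N+sp}}\right)^{\nicefrac{q}{p}}|x-y|^{\nicefrac{Nq}{p}}\,dxdy,
\]
where the residual exponent of $|x-y|$ is exactly $\tfrac{(N+sp)q}{p}-sq=\tfrac{Nq}{p}$. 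Then I would apply H\"older on the product space $\Omega\times\Omega$ with exponents $\nicefrac{p}{q}$ and $\nicefrac{p}{p-q}$, so that the first factor contributes $|u|_{s,p}^q$ while the second contributes $\big(\int_{\Omega^2}|x-y|^{\nicefrac{Nq}{p-q}}\,dxdy\big)^{\nicefrac{p-q}{p}}$. Bounding $|x-y|\le\mathrm{diam}(\Omega)$ on $\Omega^2$ and using $|\Omega\times\Omega|=|\Omega|^2$ turns this last factor into $\mathrm{diam}(\Omega)^{\nicefrac{Nq}{p}}|\Omega|^{\nicefrac{2(p-q)}{p}}$, and taking $q$-th roots (using $\tfrac{2(p-q)}{pq}=\tfrac{2}{q}-\tfrac{2}{p}$) produces precisely the stated bound.

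I do not expect a genuine obstacle here: the only points requiring care are the exponent bookkeeping that makes the leftover weight $|x-y|^{\nicefrac{Nq}{p}}$ come out cleanly, the use of the hypothesis $q>\nicefrac{N}{s}$ to guarantee $t=s-\nicefrac{N}{q}>0$ so that $|u|_{t,q}$ is a bona fide Gagliardo seminorm, and the strict inequality $q<p$, which is what makes $\nicefrac{p}{q}$ and $\nicefrac{p}{p-q}$ conjugate; finiteness of every quantity involved is automatic once $u\in W^{s,p}(\Omega)$ and $\Omega$ is bounded.
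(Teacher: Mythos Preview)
Your proof is correct and follows essentially the same approach as the paper: both arguments rest on the identity $N+tq=sq$ and a single application of H\"older's inequality on $\Omega\times\Omega$ with exponents $\nicefrac{p}{q}$ and $\nicefrac{p}{p-q}$. The only cosmetic difference is that the paper applies H\"older to $\frac{|u(x)-u(y)|^q}{|x-y|^{sq}}\cdot 1$ first and afterwards inserts the factor $|x-y|^N\le\mathrm{diam}(\Omega)^N$ to recover the full Gagliardo weight $|x-y|^{-(N+sp)}$, whereas you extract the residual weight $|x-y|^{Nq/p}$ before applying H\"older; the bookkeeping is equivalent and yields the same constants.
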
	
	\begin{proof}
		Since $q<p$, the first inequality is trivial, then, we only need to prove the second one.
		Let $u\in W^{s,p}(\Omega).$ It follows from H\"older's inequality that
		\begin{align*}
			|u|_{t,q}^q&=\int_{\Omega^2}\dfrac{|u(x)-u(y)|^q}{|x-y|^{sq}} dxdy\\
					&\le
					\left(\int_{\Omega^2}\dfrac{|u(x)-u(y)|^p}{|x-y|^{sp}} dxdy\right)^{\nicefrac{q}p}
					|\Omega|^{2-2\nicefrac{q}p}\\
					&\le\text{diam}(\Omega)^{\nicefrac{Nq}p}
					\left(\int_{\Omega^2}\dfrac{|u(x)-u(y)|^p}{|x-y|^{sp+N}} dxdy\right)^{\nicefrac{q}p}
					|\Omega|^{2-2\nicefrac{q}p},
		\end{align*}
		as we wanted to show.
	\end{proof}

\subsection{Weak and Viscosity Solutions} 

	Let us discuss the relation between the weak solutions of
	\begin{equation}\label{eq:viscosity1}
		\begin{cases}
			(-\Delta_p)^s u= f(x) &\text{ in }\Omega,\\
			u=0 &\text{ in }\Omega^c,
		\end{cases}
	\end{equation}
	and  the viscosity solutions of the same problem.
	
	\medskip
	
	We begin by introducing the precise definitions of weak and viscosity solutions.
	
	\medskip
	
	\noindent{\bf Definition (weak solution).}
	Let $f\in W^{-s,p}(\Omega)$ (the dual space of $\widetilde{W}^{s,p}(\Omega)$) and $u\in \widetilde{W}^{s,p}(\Omega).$
	We say that $u$ is a weak solution of \eqref{eq:viscosity1} if only if
	\[
		\int_{\R^{2N}}
				\dfrac{|u(x)-u(y)|^{p-2}(u(x)-u(y))(v(x)-v(y))}{|x-y|^{N+rp}}dxdy
		=\langle f,v\rangle 
	\]
	for every $v\in \widetilde{W}^{s,p}(\Omega)$.
	Here $\langle \cdot,\cdot\rangle$ denotes the duality pairing of $\widetilde{W}^{s,p}(\Omega)$ 
	with $W^{-s,p}(\Omega).$
	
	\medskip
	 
	\noindent{\bf Definition (viscosity solution).}
	Let $p\ge2,$ $f\in C(\overline{\Omega})$ and $u\in C(\R^N)$ be such that $u=0$ in 
	$\Omega^c.$ 
	
	We say that $u$ is a viscosity subsolution
	of \eqref{eq:viscosity1} at a point $x_0\in \Omega$ if and only if
	for any test function $\varphi\in C^2_0(\R^N)$ such that 
			$u(x_0)=\varphi(x_0)$ and $u(x)\le\varphi(x)$ for all $x\in\R^N$ we have that
			\[
				2\int_{\R^N}
				\dfrac{|\varphi(x_0)-\varphi(y)|^{p-2}(\varphi(x_0)-\varphi(y))}{|x_0-y|^{N+sp}} \, dy \le
					f(x_0).
			 \]
	
	We say that $u$ is a viscosity supersolution
				of \eqref{eq:viscosity1} at a point $x_0\in \Omega$ if and only if
					for any test function $\varphi\in C^2_0(\R^N)$ such that 
					$u(x_0)=\varphi(x_0)$ and $u(x)\ge\varphi(x)$ 
					for all $x\in\R^N$ we have that
					\[
						2\int_{\R^N}
						\dfrac{|\varphi(x_0)-\varphi(y)|^{p-2}(\varphi(x_0)-\varphi(y))}{|x_0-y|^{N+sp}} \, dy \ge
							f(x_0).
					 \]

		Finally, $u$ is called a viscosity solution 
			of \eqref{eq:viscosity1} if it is both a 
			viscosity super- and subsolution at $x_0$ for any $x_0\in\Omega$.
 
\medskip	
	
	Following carefully the proof of \cite[Proposition 11]{LL}, we have the following result.
	
	\begin{teo}\label{teo:debilvisco}
		Let $p\ge2$ and $f\in C(\overline{\Omega}).$ If $u$ is a weak solution of  \eqref{eq:viscosity1}
		then it is also a viscosity solution.
	\end{teo}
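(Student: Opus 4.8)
The plan is to follow the standard bridge between weak and viscosity solutions for the fractional $p$-Laplacian (as in \cite[Proposition 11]{LL}), with the one extra point of care being that our operator is the genuine nonlocal $(p,s)$-Laplacian with kernel $|x-y|^{-N-sp}$ rather than the integer-order one. First I would argue by contradiction: suppose $u$ is a weak solution but fails to be, say, a viscosity supersolution at some point $x_0\in\Omega$. Then there is a test function $\varphi\in C^2_0(\R^N)$ touching $u$ from below at $x_0$ (so $u(x_0)=\varphi(x_0)$, $u\ge\varphi$ on $\R^N$) for which
\[
	2\int_{\R^N}\dfrac{|\varphi(x_0)-\varphi(y)|^{p-2}(\varphi(x_0)-\varphi(y))}{|x_0-y|^{N+sp}}\,dy < f(x_0).
\]
Since $f\in C(\overline\Omega)$, $\varphi\in C^2_0$ and $p\ge 2$, the principal value integrand is integrable near $x_0$, so by continuity there are $\delta, r>0$ with $B_{2r}(x_0)\subset\Omega$ such that, after replacing $\varphi$ by $\varphi_\delta:=\varphi+\delta$ on $B_r(x_0)$ and leaving it equal to $\varphi$ outside, the strict inequality
\[
	2\int_{\R^N}\dfrac{|\varphi_\delta(x)-\varphi_\delta(y)|^{p-2}(\varphi_\delta(x)-\varphi_\delta(y))}{|x-y|^{N+sp}}\,dy < f(x)
\]
holds pointwise for all $x\in B_r(x_0)$. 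Here one must check that bumping $\varphi$ up by the constant $\delta$ only on the small ball $B_r(x_0)$ perturbs the nonlocal integral by a controlled amount, uniformly for $x\in B_{r/2}(x_0)$, say; this is where the explicit kernel and the smoothness of $\varphi$ are used, and it is the only mildly delicate estimate.

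Next I would test the weak formulation against a suitable nonnegative bump. Choose $\psi\in C^\infty_0(B_{r/2}(x_0))$ with $\psi\ge 0$, $\psi\not\equiv 0$, and small enough that $v:=\max\{\varphi_\delta,u\}$ still satisfies $v=u$ outside $B_r(x_0)$ and $v\ge u$ everywhere; in fact, since $\varphi$ touches $u$ from below, for $\delta$ small the set where $\varphi_\delta>u$ is a nonempty compact subset of $B_{r/2}(x_0)$, and $w:=v-u=(\varphi_\delta-u)^+$ is a legitimate nonnegative test function in $\widetilde W^{s,p}(\Omega)$ supported in $B_{r/2}(x_0)$. The key inequality is the pointwise monotonicity estimate: for $x$ in the contact set $\{w>0\}$ one has $v(x)=\varphi_\delta(x)$, and using that $v\ge u$ with equality outside the support, a standard comparison of the kernels gives
\[
	\int_{\R^N}\dfrac{|v(x)-v(y)|^{p-2}(v(x)-v(y))-|u(x)-u(y)|^{p-2}(u(x)-u(y))}{|x-y|^{N+sp}}\,dy \le (-\Delta_p)^s\varphi_\delta(x),
\]
again valid because $v-u\ge 0$ everywhere and the map $\xi\mapsto |\xi|^{p-2}\xi$ is monotone. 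Integrating this against $w\ge 0$, combined with the pointwise differential inequality from the previous paragraph, yields
\[
	\int_{\R^{2N}}\dfrac{(|v(x)-v(y)|^{p-2}(v(x)-v(y))-|u(x)-u(y)|^{p-2}(u(x)-u(y)))(w(x)-w(y))}{|x-y|^{N+sp}}\,dxdy < \int_\Omega f\,w\,dx.
\]

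Finally, since $u$ is a weak solution, the term $\int_{\R^{2N}}\frac{|u(x)-u(y)|^{p-2}(u(x)-u(y))(w(x)-w(y))}{|x-y|^{N+sp}}\,dxdy$ equals $\langle f,w\rangle=\int_\Omega f w\,dx$ (note $w\in\widetilde W^{s,p}(\Omega)$). Subtracting, we get
\[
	\int_{\R^{2N}}\dfrac{(|v(x)-v(y)|^{p-2}(v(x)-v(y))-|u(x)-u(y)|^{p-2}(u(x)-u(y)))(v(x)-u(x)-(v(y)-u(y)))}{|x-y|^{N+sp}}\,dxdy < 0.
\]
But the integrand is pointwise nonnegative by strict monotonicity of $\xi\mapsto|\xi|^{p-2}\xi$ (each term is of the form $(a-b)(\phi(a)-\phi(b))\ge 0$ with $a=v(x)-v(y)$, $b=u(x)-u(y)$ and $\phi$ monotone), so the left-hand side is $\ge 0$ — a contradiction. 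The subsolution case is symmetric, using $\min\{\varphi_\delta,u\}$ with $\varphi$ touching from above and $\delta<0$. The main obstacle, as noted, is the uniform control of the nonlocal integral under the local $\delta$-perturbation of the test function on $B_r(x_0)$; everything else is the routine monotonicity-of-the-$p$-Laplacian machinery, and it goes through verbatim from \cite{LL} because only the algebraic monotonicity and the integrability of the kernel against $C^2$ functions are used, both of which hold for our $(p,s)$-Laplacian when $p\ge 2$.
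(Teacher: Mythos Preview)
The paper gives no proof of its own here; it simply refers to \cite[Proposition~11]{LL}, and your sketch is exactly that argument, so the approaches coincide.

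One step in your write-up, however, is not correct as stated. The displayed pointwise inequality
\[
\int_{\R^N}\frac{|v(x)-v(y)|^{p-2}(v(x)-v(y))-|u(x)-u(y)|^{p-2}(u(x)-u(y))}{|x-y|^{N+sp}}\,dy \ \le\ (-\Delta_p)^s\varphi_\delta(x)
\]
for $x\in\{w>0\}$ does not follow from monotonicity: the second integrand on the left is, up to a constant, $(-\Delta_p)^s u(x)$, and you have no pointwise control on it since $u$ is only a weak solution. What monotonicity \emph{does} give, using $v(x)=\varphi_\delta(x)$ on $\{w>0\}$ and $v\ge\varphi_\delta$ everywhere, is the simpler bound
\[
\int_{\R^N}\frac{J_p(v(x)-v(y))}{|x-y|^{N+sp}}\,dy\ \le\ \int_{\R^N}\frac{J_p(\varphi_\delta(x)-\varphi_\delta(y))}{|x-y|^{N+sp}}\,dy\ =\ \tfrac12(-\Delta_p)^s\varphi_\delta(x),
\qquad J_p(t)\coloneqq|t|^{p-2}t.
\]
Using the oddness of $J_p$ to symmetrize then yields directly
\[
\int_{\R^{2N}}\frac{J_p(v(x)-v(y))\,(w(x)-w(y))}{|x-y|^{N+sp}}\,dxdy
\ \le\ \int_\Omega (-\Delta_p)^s\varphi_\delta\, w\,dx
\ <\ \int_\Omega f\,w\,dx,
\]
and subtracting the weak equation for $u$ tested against $w$ produces your final contradiction. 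So the strategy and the endpoint are right; only this intermediate pointwise claim needs to be replaced by the symmetrization step above. (Minor: the bump $\psi\in C^\infty_0(B_{r/2})$ you introduce is never used and can be deleted.)
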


	The following result is one of the key to show that every eigen-pair associated to the first 
	eigenvalue has constant sign. For the proof we refer to \cite[Lemma 12]{LL}. 
	
	\begin{lem}\label{lema:viscopositivo}
		Let $p\ge 2.$ Assumme $u\ge0$ and $u\equiv0$ in $\Omega^c.$ If $u$ is a 
		viscosity supersolution of $(-\Delta_p)^su=0$ in $\Omega$ then either
		$u>0$ in $\Omega$ or $u\equiv 0$ in $\R^N.$
 	\end{lem}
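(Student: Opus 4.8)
The plan is to argue by contradiction, exploiting the nonlocal nature of the operator: for $(-\Delta_p)^s$, a single well-chosen $C^2_0$ test function touching $u$ from below already excludes an interior zero, so no Hopf-type barrier is needed. If $u\equiv0$ in $\R^N$ there is nothing to prove, so I would assume $u\not\equiv0$; since $u\equiv0$ in $\Omega^c$, this produces a point $x_1\in\Omega$ with $u(x_1)>0$. Suppose, toward a contradiction, that $u(x_0)=0$ for some $x_0\in\Omega$. Because $u\ge0$, the point $x_0$ is a global minimum of $u$ on $\R^N$.

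Next I would build the test function. Viscosity solutions are continuous by definition, so, letting $c:=u(x_1)/2>0$, the set $\{u>c\}$ is open and contains $x_1$; hence there is $\rho>0$ with $B:=B_\rho(x_1)\subset\Omega$ and $u>c$ on $B$, and in particular $x_0\notin B$. I would then fix a bump $\psi\in C^\infty_0(B)$ with $0\le\psi\le1$ and $\psi\not\equiv0$, and set $\varphi:=c\,\psi\in C^2_0(\R^N)$. The verification that $\varphi$ is admissible for the supersolution property at $x_0$ is routine: $\varphi(x_0)=0=u(x_0)$ because $x_0\notin\operatorname{supp}\varphi\subset B$, while on $B$ one has $\varphi\le c<u$ and off $B$ one has $\varphi=0\le u$, so $u\ge\varphi$ on all of $\R^N$ and $\varphi$ touches $u$ from below at $x_0$.

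Finally I would plug $\varphi$ into the defining inequality for a viscosity supersolution of $(-\Delta_p)^su=0$ at $x_0$. Since $\varphi(x_0)=0$ and $\varphi\ge0$, one has $|\varphi(x_0)-\varphi(y)|^{p-2}(\varphi(x_0)-\varphi(y))=-\varphi(y)^{p-1}$, so the left-hand side collapses to $-2\int_{\R^N}\frac{\varphi(y)^{p-1}}{|x_0-y|^{N+sp}}\,dy$, and the requirement ``$\ge f(x_0)=0$'' becomes $\int_{\R^N}\frac{\varphi(y)^{p-1}}{|x_0-y|^{N+sp}}\,dy\le0$. But $\varphi\ge0$ is strictly positive on the open set $\{\psi>0\}$ of positive measure, and since $x_0\notin\operatorname{supp}\varphi$ the weight $|x_0-y|^{-N-sp}$ is bounded and bounded away from $0$ there; hence that integral is strictly positive, a contradiction. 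Therefore no interior zero exists and $u>0$ in $\Omega$, which gives the stated dichotomy.

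The step I expect to need the most care is precisely the admissibility of $\varphi$: the paper's definition of viscosity supersolution demands the \emph{global} ordering $u\ge\varphi$ on all of $\R^N$, not just near $x_0$. This forces the bump to be placed in a region where $u$ is uniformly bounded below by a positive constant (so that $\varphi$ automatically stays under $u$) and to be kept disjoint from $x_0$ (so that it neither destroys the contact $\varphi(x_0)=u(x_0)=0$ nor creates a singularity in the nonlocal integral at $x_0$). Once these geometric choices are made correctly, the remainder is a one-line computation; the hypothesis $p\ge2$ is used only so that the viscosity framework of the preceding subsection applies. I would also note that, in contrast with the local $p$-Laplacian, it is exactly the nonlocality that makes the argument so short — the positive bump is directly ``seen'' by the operator evaluated at $x_0$.
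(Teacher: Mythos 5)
Your proof is correct; the paper gives no argument of its own here but simply cites [LL, Lemma 12], and your construction — a smooth bump of height $c<\inf_B u$ supported in a ball $B$ where $u$ is uniformly positive, touching $u$ from below at the putative interior zero $x_0\notin B$ — is precisely the standard argument used there. You have also correctly handled the only delicate points: the global ordering $u\ge\varphi$ on all of $\R^N$ required by the paper's definition, and the fact that $\operatorname{supp}\varphi$ lies at positive distance from $x_0$, so the nonlocal integral is finite and strictly negative, contradicting the supersolution inequality.
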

 	
\section{The eigenvalue problem}\label{section:1erAutov}
	We begin showing that $\lambda_{1,p}$ is the first eigenvalue of
	our problem.
	
	\begin{lem}\label{lema:1A1}
		There is a nontrivial minimizer 
		$(u,v)$ of \eqref{eq:1eraut} such that $u,v > 0$ a.e. in $\Omega$
		and $(u,v)$ is a weak solution of \eqref{eq:autovalores} with
		$\lambda=\lambda_{1,p}.$		
	\end{lem}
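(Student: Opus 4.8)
The plan is to obtain the minimizer by the direct method in the calculus of variations, then establish positivity of the components via a truncation argument together with the viscosity-theoretic result of Lemma~\ref{lema:viscopositivo}, and finally check that the minimizer solves the Euler--Lagrange system weakly. Concretely, first I would take a minimizing sequence $(u_n,v_n)\in\wrsp(\Omega)$ for \eqref{eq:1eraut}; by homogeneity of the quotient we may normalize so that $|(u_n,v_n)|_{\alpha,\beta}=1$, so that $[u_n]_{r,p}^p+[v_n]_{s,p}^p\to\lambda_{1,p}$ and in particular $[u_n]_{r,p}$ and $[v_n]_{s,p}$ are bounded. By the Poincar\'e inequality of Lemma~\ref{lem:poincare}, $\|u_n\|_{L^p(\Omega)}$ and $\|v_n\|_{L^p(\Omega)}$ are bounded too, hence $(u_n,v_n)$ is bounded in the reflexive Banach space $\wrsp(\Omega)$. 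Extracting a weakly convergent subsequence $(u_n,v_n)\rightharpoonup(u,v)$, and using the compact embedding $\widetilde{W}^{t,p}(\Omega)\hookrightarrow\hookrightarrow L^p(\Omega)$ (standard for fractional Sobolev spaces on bounded smooth domains), we get strong $L^p$ convergence and a.e.\ convergence along a further subsequence.

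The next step is to pass to the limit in the quotient. By weak lower semicontinuity of the seminorms $[\,\cdot\,]_{r,p}$ and $[\,\cdot\,]_{s,p}$ (they are powers of norms), $[u]_{r,p}^p+[v]_{s,p}^p\le\liminf([u_n]_{r,p}^p+[v_n]_{s,p}^p)=\lambda_{1,p}$. For the denominator I would use the strong $L^p$ (and a.e.) convergence together with a uniform integrability or dominated-type argument to conclude $\int_\Omega|u_n|^{\alpha_p}|v_n|^{\beta_p}\,dx\to\int_\Omega|u|^{\alpha_p}|v_p|^{\beta_p}\,dx=1$: note $\alpha_p+\beta_p=p$, so by Young's inequality $|u_n|^{\alpha_p}|v_n|^{\beta_p}\le\frac{\alpha_p}{p}|u_n|^p+\frac{\beta_p}{p}|v_n|^p$, and strong $L^p$ convergence of both factors gives the convergence of the product integral (e.g.\ via the generalized dominated convergence theorem, or by writing $|u_n|^{\alpha_p}|v_n|^{\beta_p}-|u|^{\alpha_p}|v|^{\beta_p}$ and splitting). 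Hence $|(u,v)|_{\alpha,\beta}=1$, in particular $uv\not\equiv0$, so $(u,v)$ is admissible and the quotient equals $\lambda_{1,p}$; thus $(u,v)$ is a minimizer.

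Then I would replace $(u,v)$ by $(|u|,|v|)$: this does not increase the seminorms (since $\big||u|(x)-|u|(y)\big|\le|u(x)-u(y)|$) and leaves the denominator unchanged, so WLOG $u,v\ge0$. Taking the first variation of the quotient at the minimizer in directions $(w,0)$ and $(0,z)$ with $(w,z)\in\wrsp(\Omega)$ yields precisely the weak formulation of \eqref{eq:autovalores} with $\lambda=\lambda_{1,p}$ (the Lagrange multiplier is forced to be $\lambda_{1,p}$ by testing with $(u,v)$ itself). Finally, to upgrade $u,v\ge0$ to $u,v>0$ in $\Omega$: since $v\ge0$ is a weak solution of $(-\Delta_p)^s v=\lambda_{1,p}\frac{\beta}{p}|u|^{\alpha}|v|^{\beta-2}v=:g\ge0$ in $\Omega$, it is in particular a weak supersolution of $(-\Delta_p)^sv=0$; by Theorem~\ref{teo:debilvisco} (after noting $g\in C(\overline\Omega)$, which requires the boundedness/continuity of the eigenfunctions — or, if continuity is not yet available, one applies the comparison directly at the level of weak supersolutions as in \cite{LL}) it is a viscosity supersolution, and Lemma~\ref{lema:viscopositivo} gives $v>0$ in $\Omega$ or $v\equiv0$; since $v\not\equiv0$ we get $v>0$, and symmetrically $u>0$.

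The main obstacle I anticipate is the regularity needed to invoke the viscosity machinery: Theorem~\ref{teo:debilvisco} and Lemma~\ref{lema:viscopositivo} are stated for $p\ge2$ and require the datum $f$ to be continuous on $\overline\Omega$, so one must first know that $u,v$ (and hence the right-hand sides) are bounded and continuous; this boundedness is exactly one of the assertions of Theorem~\ref{teo:1eraut} and presumably is obtained by a separate Moser/De Giorgi-type iteration, so the logical order of the paper must supply it before the positivity step, or one must argue positivity using only weak comparison as in \cite{LL}. A secondary point is handling $1<p<2$ (excluded from the viscosity results), where positivity would instead follow from a purely variational/logarithmic-test-function argument; the statement as given may implicitly assume $p\ge2$ for the positivity claim, or treat the two ranges separately.
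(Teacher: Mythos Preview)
Your direct-method argument for existence of a minimizer is essentially the paper's: normalize a minimizing sequence, use compactness of the embedding into $L^p$ to pass to the limit in the constraint, and use weak lower semicontinuity of the seminorms for the numerator; then replace $(u,v)$ by $(|u|,|v|)$ and derive the Euler--Lagrange system. The only substantive difference is in the \emph{positivity} step, and here the obstacle you flag is real and the paper handles it differently.

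The paper does \emph{not} use the viscosity machinery (Theorem~\ref{teo:debilvisco} and Lemma~\ref{lema:viscopositivo}) at this point; instead it invokes \cite[Theorem~A.1]{Brasco1}, a strong minimum principle for nonnegative weak supersolutions of $(-\Delta_p)^t w=0$ proved via Picone-type inequalities. That result works directly at the weak level, requires no continuity of the right-hand side, and is not restricted to $p\ge 2$, so it sidesteps exactly the circularity you identify (needing boundedness/continuity of $u,v$ before one can even speak of viscosity supersolutions). Your proposed route is in fact carried out later in the paper---as Corollary~\ref{co:autopositivo*}---but only \emph{after} $L^\infty$ bounds (Lemma~\ref{lema.cota}) and H\"older regularity (Lemma~\ref{lema:regularida}) have been established. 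So your proof sketch is sound in spirit, but for Lemma~\ref{lema:1A1} itself you should either cite a weak-level strong minimum principle as the paper does, or explicitly postpone the strict positivity until after the regularity lemmas.
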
 
	
	\begin{proof}
		Since $C_0^\infty(\Omega)\times C_0^\infty(\Omega)\subset\wrsp(\Omega),$ we have
		\begin{equation}\label{eq:1A1.1}
			0\le\inf\left\{
					\dfrac{[u]_{r,p}^p+[v]_{s,p}^p}
					{|(u,v)|_{\alpha,\beta}^p}\colon
					(u,v)\in\wrsp(\Omega), uv\not\equiv0
					\right\}<\infty.
		\end{equation}
				 
		Now, we consider a minimizing sequence $\{(u_n,v_n)\}_{n\in\mathbb{N}}$ normalized according to 
		$|(u_n,v_n)|_{(\alpha,\beta)}=1$. It follows from \eqref{eq:1A1.1} that $\{(u_n,v_n)\}$ is bounded in 
		$\wrsp(\Omega).$ Then, by the compactness of the Sobolev embedding theorem, there is a subsequence
		$\{(u_{n_j},v_{n_j})\}_{j\in\mathbb{N}}$ such that 
		\begin{align*}
			&u_{n_j} \rightharpoonup u \text{ weakly in }\widetilde{\mathcal{W}}^{r,p}(\Omega),
			\quad & v_{n_j} \rightharpoonup v \text{ weakly in }\widetilde{\mathcal{W}}^{s,p}(\Omega),\\
			&u_{n_j} \to u \text{ strongly in } L^p(\Omega),
			\quad & v_{n_j} \to v \text{ strongly in } L^p(\Omega).
		\end{align*}
		Thus, $|(u,v)|_{(\alpha,\beta)}=1$ and
		\[
			[u]_{r,p}^p+[v]_{s,p}^p\le\liminf_{j\to\infty}
			\left\{ [u_{n_j}]_{r,p}^p+[v_{n_j}]_{s,p}^p \right\} =\lambda_{1,p}.
		\]
		Therefore $(u,v)$ is a minimizer of \eqref{eq:1eraut}. Moreover, since 
		\[
			[|u|]_{r,p}^p+[|v|]_{s,p}^p\le [u]_{r,p}^p+[v]_{r,p}^p,
		\] 
		we can assume that $u$ and $v$ are non-negative functions.
		
		The fact that this minimizer is a weak solution \eqref{eq:autovalores} with
		$\lambda=\lambda_{1,p}$ is straightforward and can be obtained from the arguments in \cite{LL}. 
		
		Finally, since $u$ and $v$ are non-negative function and $(u,v)$ is a weak solution of
		\eqref{eq:autovalores} with $\lambda=\lambda_{1,p},$ by \cite[Theorem A.1]{Brasco1}, we obtain
		$u,v$ are positive functions a.e. in $\Omega.$
	\end{proof}
	
	The following result follows from the classical inequality
	\[
		||a|-|b||<|a-b|\quad \forall ab<0.
	\]
	
	\begin{co}\label{co:autopositivo}
		If $(u,v)$ is an eigen-pair corresponding to $\lambda_{1,p}$ 
		then $u$ and $v$ have constant sign.
	\end{co}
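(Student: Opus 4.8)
The plan is to argue by contradiction, using the observation made just before \eqref{eq:1eraut}: since $(u,v)$ is an eigen-pair for $\lambda_{1,p}$, it satisfies $uv\not\equiv0$ and
$$
\lambda_{1,p}=\frac{[u]_{r,p}^p+[v]_{s,p}^p}{|(u,v)|_{\alpha,\beta}^p},
$$
so $(u,v)$ is in fact a minimizer of \eqref{eq:1eraut}. Suppose, towards a contradiction, that $u$ changes sign in $\Omega$, i.e. both $A\coloneqq\{x\in\Omega\colon u(x)>0\}$ and $B\coloneqq\{x\in\Omega\colon u(x)<0\}$ have positive Lebesgue measure. I would then test \eqref{eq:1eraut} against the competitor $(|u|,|v|)\in\wrsp(\Omega)$.

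The computation splits into two elementary parts. For the denominator, $|\,|u|\,|^{\alpha}|\,|v|\,|^{\beta}=|u|^{\alpha}|v|^{\beta}$ pointwise, hence $|(|u|,|v|)|_{\alpha,\beta}=|(u,v)|_{\alpha,\beta}\ne0$ and the competitor is admissible. For the numerator I would invoke the quoted inequality $|\,|a|-|b|\,|\le|a-b|$, valid for all $a,b\in\R$, which yields $[|u|]_{r,p}^p\le[u]_{r,p}^p$ and $[|v|]_{s,p}^p\le[v]_{s,p}^p$; crucially, the inequality is \emph{strict}, $|\,|a|-|b|\,|<|a-b|$, as soon as $ab<0$. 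Since $A\times B$ has positive measure in $\R^{2N}$ and the kernel $|x-y|^{-(N+rp)}$ is bounded below by a positive constant there ($A,B$ being bounded), the integrand defining $[|u|]_{r,p}^p-[u]_{r,p}^p$ is strictly negative on a set of positive measure, so $[|u|]_{r,p}^p<[u]_{r,p}^p$.

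Putting these together gives
$$
[|u|]_{r,p}^p+[|v|]_{s,p}^p<[u]_{r,p}^p+[v]_{s,p}^p=\lambda_{1,p}\,|(u,v)|_{\alpha,\beta}^p=\lambda_{1,p}\,|(|u|,|v|)|_{\alpha,\beta}^p,
$$
so $(|u|,|v|)$ has Rayleigh quotient strictly below $\lambda_{1,p}$, contradicting the definition of $\lambda_{1,p}$ as an infimum. Hence $u$ has constant sign; exchanging the roles of $u$ and $v$ gives the same conclusion for $v$. The only point that needs a word of care is the passage from ``$u$ changes sign'' to the strict inequality of the seminorms, i.e. checking that $A\times B$ carries positive mass for the measure $|x-y|^{-(N+rp)}\,dx\,dy$; this is immediate since $A$ and $B$ are bounded subsets of $\Omega$ with positive Lebesgue measure, and the remaining estimates are just the computation already carried out in the proof of Lemma \ref{lema:1A1}.
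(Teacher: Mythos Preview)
Your argument is correct and is precisely the approach the paper has in mind: the paper gives no proof beyond the remark that the result ``follows from the classical inequality $||a|-|b||<|a-b|$ for all $ab<0$,'' and you have supplied exactly the details behind that remark---showing that an eigen-pair for $\lambda_{1,p}$ is a minimizer of the Rayleigh quotient, and that a sign change in either component would allow $(|u|,|v|)$ to undercut the infimum via the strict inequality on the set $A\times B$.
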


	Our next aim is to prove that all the eigen-pairs associated to $\lambda_{1,p}$ are bounded.
	For this, we follow ideas from \cite[Theorem 3.2]{Brasco2}.
	
	\begin{lem}\label{lema.cota}
		If $(u,v)$ is an eigen-pair associated to $\lambda_{1,p},$ 
		then $u,v \in L^\infty(\R^N).$		
	\end{lem}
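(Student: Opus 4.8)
The plan is to run a Moser-type iteration on each component separately, exploiting that the eigen-pair is a weak solution of a decoupled-looking pair of fractional $p$-Laplacian equations with right-hand sides that are, by Corollary \ref{co:autopositivo}, sign-definite and integrable. Concretely, writing the first equation as $(-\Delta_p)^r u = g$ in $\Omega$ with $g = \lambda_{1,p}\tfrac{\alpha}{p}|u|^{\alpha-2}u|v|^{\beta}$, I will test the weak formulation with a truncated, suitably-powered version of $u$, namely $\varphi = u_k^{\gamma}$ where $u_k \coloneqq \min\{u_+, k\}$ (or more precisely $u\,\min\{u_+,k\}^{\gamma-1}$, adjusted so that $\varphi \in \widetilde{W}^{r,p}(\Omega)$), iterate over an increasing sequence of exponents $\gamma_n \to \infty$, and pass to the limit $k \to \infty$ at each stage via monotone convergence. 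The symmetric argument applied to the second equation gives boundedness of $v$.

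First I would set up the nonlocal algebraic inequality that makes the iteration work: for the kernel term, one uses the standard convexity estimate
\[
|a-b|^{p-2}(a-b)\bigl(A(a)-A(b)\bigr) \ge c_p\,\bigl|A(a)^{1/p} - A(b)^{1/p}\bigr|^{p}
\]
valid for the increasing function $A(t) = \min\{t_+,k\}^{\gamma}$ (this is exactly the inequality behind \cite[Theorem 3.2]{Brasco2}), which lets me bound $[\,u_k^{\gamma/p}\,]_{r,p}^p$ from above by a constant times $\int_\Omega g\, u_k^{\gamma-1}\,u$. On the right I would control $|v|^{\beta}$ by noting that $v \in L^p(\Omega)$ and $\beta \le p$, so $|v|^{\beta} \in L^{p/\beta}(\Omega) \subseteq L^1(\Omega)$; combined with $|u|^{\alpha - 1}\le$ (lower powers) and Hölder, the product $g\,u_k^{\gamma-1}u$ is estimated by $\|v\|_{L^p}^{\beta}$ times an $L^{q}$-norm of a power of $u_k$ with $q$ slightly above $1$, which is exactly the structure needed to feed the fractional Sobolev inequality (Lemma \ref{lem:inclusion}, giving the embedding $\widetilde{W}^{r,p}\hookrightarrow L^{p^*}$ with a genuine gain $p^* > p$ under $p\min\{r,s\}\ge N$ — though for fixed $p$ one only needs $rp<N$ giving $p^*=Np/(N-rp)$, or $rp\ge N$ giving any finite exponent) on the left-hand side. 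This produces the reverse-Hölder recursion $\|u_+\|_{L^{\kappa\gamma}} \le (C\gamma)^{1/\gamma}\,\|u_+\|_{L^{\gamma}}$ with $\kappa = p^*/p > 1$; taking $\gamma = \gamma_0\kappa^{n}$ and multiplying the estimates telescopically, the constants $\prod (C\gamma_0\kappa^n)^{1/(\gamma_0\kappa^n)}$ converge, and one concludes $\|u_+\|_{L^\infty(\Omega)} \le C\|u_+\|_{L^{p^*}(\Omega)} < \infty$. Applying the same with $u$ replaced by $-u$ (using that $u$ has constant sign, so really only one case occurs) gives $u \in L^\infty(\Omega)$, and since $u \equiv 0$ in $\Omega^c$, in fact $u \in L^\infty(\R^N)$; likewise for $v$.

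The main obstacle I expect is the bookkeeping in the nonlocal test-function step: unlike the local case one cannot simply integrate by parts, so one must verify that the truncated power $\varphi = u\,\min\{u_+,k\}^{\gamma-1}$ is an admissible test function in $\widetilde{W}^{r,p}(\Omega)$ (it is, being a Lipschitz composition of $u$ vanishing outside $\Omega$, bounded for each fixed $k$), and then carefully justify the pointwise inequality relating $|u(x)-u(y)|^{p-2}(u(x)-u(y))(\varphi(x)-\varphi(y))$ to $|u_k^{\gamma/p}(x) - u_k^{\gamma/p}(y)|^p$ uniformly over the double integral, including the contributions from $x\in\Omega, y\in\Omega^c$ where $u(y)=0$. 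A secondary technical point is tracking the dependence of the Sobolev constant on $p$ — but since here $p$ is fixed this is harmless; only in the companion results on the limit $p\to\infty$ would that matter. Once the iteration inequality is established, the convergence of the product of constants and the passage $k\to\infty$ by Fatou/monotone convergence are routine, so I would state those briefly and refer to \cite[Theorem 3.2]{Brasco2} for the details of the convexity estimate.
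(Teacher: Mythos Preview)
Your overall framework---Moser iteration following \cite[Theorem~3.2]{Brasco2}, testing with truncated powers and invoking the convexity estimate---is the same as the paper's, but the way you handle the coupling between $u$ and $v$ has a real gap: the recursion $\|u_+\|_{L^{\kappa\gamma}} \le (C\gamma)^{1/\gamma}\|u_+\|_{L^{\gamma}}$ with $\kappa = p_r^\star/p$ does \emph{not} follow from your estimate of the right-hand side.

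You propose to iterate on $u$ alone, treating $|v|^\beta$ as a fixed factor in $L^{p/\beta}$ via $v\in L^p(\Omega)$. Carrying your H\"older step through, testing with $u\,u_k^{\gamma-1}$ (with $u>0$, say) gives
\[
c\,\bigl[u_k^{(\gamma+p-1)/p}\bigr]_{r,p}^p \;\le\; \lambda_{1,p}\frac{\alpha}{p}\int_\Omega u^{\alpha}\,u_k^{\gamma-1}\,v^\beta\,dx
\;\le\; C\,\|v\|_{L^p}^{\beta}\,\|u\|_{L^{(\alpha+\gamma-1)p/\alpha}}^{\,\alpha+\gamma-1}.
\]
After Sobolev on the left and $k\to\infty$, writing $\sigma=\gamma+p-1$, this becomes
\[
\|u\|_{L^{\sigma p_r^\star/p}}^{\sigma}\;\le\; C(\sigma)\,\|u\|_{L^{(\sigma-\beta)p/\alpha}}^{\,\sigma-\beta},
\]
so the exponents $t_n$ you actually generate satisfy the linear recursion $t_{n+1}=A\,t_n+B$ with $A=\alpha\,p_r^\star/p^{2}$, not $p_r^\star/p$. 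For $t_n\to\infty$ you need $A>1$, i.e.\ $rp^{2}>N\beta$; this fails whenever $\alpha$ is small and $rp$ is well below $N$ (e.g.\ $N=p=10$, $\alpha=1$, $\beta=9$, $r=\tfrac12$ gives $p_r^\star=20$ and $A=\tfrac15$), and then your iteration stalls at a finite Lebesgue exponent and never reaches $L^\infty$.

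The paper avoids this by running a \emph{coupled} iteration: it tests both equations with $u_M^{\,q}$ and $v_M^{\,q}$, then uses Young's inequality with conjugate exponents $\tfrac{p+q-1}{\alpha+q-1}$ and $\tfrac{p+q-1}{\beta}$ to bound each right-hand side by $\int_\Omega u^{p+q-1}\,dx+\int_\Omega v^{p+q-1}\,dx$. The recursion is then for the sum $\|u\|_{L^{\mathcal{Q}p}}^{\mathcal{Q}p}+\|v\|_{L^{\mathcal{Q}p}}^{\mathcal{Q}p}$ with the full Sobolev gain $\mathcal{Q}\mapsto \mathcal{Q}\,\tfrac{N}{N-rp}$, and the coupling costs nothing. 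To rescue your decoupled scheme you would have to feed the improved integrability of $v$ back into the $u$-iteration at every step (and vice versa), which is precisely the coupled argument.
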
		
	
	\begin{proof}
		Without loss of generality we can assume that $r\le s$ and $u,v>0$ a.e. in $\Omega.$ 
		
		It follows from the fractional Sobolev embedding theorem
		(see, e.g., \cite[Corollary 4.53 and Theorem 4.54]{DD}) that,
		if $r>\nicefrac{N}{p}$ then the assertion holds.
		
		Then we need to prove that the assertion also holds in the following 
		cases:
		\begin{description}
			\item[Case 1] $r<\nicefrac{N}{p};$
			\item[Case 2] $r=\nicefrac{N}{p}.$ 
		\end{description} 
		
		Before we start to analyze the different cases, we will show two inequalities.
		For every $M>0,$ we define
		\[
			u_M(x)\coloneqq \min\{u(x),M\}
			\quad\text{ and }\quad v_M(x)\coloneqq \min\{v(x),M\}.
		\]
		Since $(u,v)\in\wrsp(\Omega),$ it is not hard to verify that $(u_M,v_M)\in\wrsp(\Omega).$
		Moreover if $q\ge 1$ then $(u_M^q,v_M^q)\in\wrsp(\Omega).$ Then, 
		since $(u,v)$ is an eigen-pair associated to $\lambda_{1,p},$ 
		$u_M\le u,$ $v_M\le v,$ and $\alpha,\beta\le p,$ we have
		\begin{align*}
			\int_{\R^{2N}}
			\dfrac{|u(x)-u(y)|^{p-2}(u(x)-u(y))(u_M(x)-u_M(y))}{|x-y|^{N+rp}}dxdy
				&\le\lambda_{1,p}\int_{\Omega}u^{\alpha+q-1}v^{\beta} dx,\\
			\int_{\R^{2N}}
			\dfrac{|v(x)-v(y)|^{p-2}(v(x)-v(y))(v_M(x)-v_M(y))}{|x-y|^{N+sp}}dxdy
				&\le\lambda_{1,p}\int_{\Omega}u^{\alpha}v^{\beta+q-1} dx.
		\end{align*}
		Hence, by using \cite[Lemma C2]{Brasco2}, we get  
		\begin{equation}\label{eq:brasco1}
			\begin{aligned}
				\dfrac{qp^p}{q+p-1}
				\int_{\R^{2N}}\frac{|u_M^{\frac{q+p-1}p}(x)-u_M^{\frac{q+p-1}p}(y)|^{p}}{|x-y|^{N+rp}}dxdy
					&\le\lambda_{1,p}\int_{\Omega}u^{\alpha+q-1}v^{\beta} dx,\\
				\dfrac{qp^p}{q+p-1}\int_{\R^{2N}}
				\dfrac{|v_M^{\frac{q+p-1}p}(x)-v_M^{\frac{q+p-1}p}(y)|^{p}}{|x-y|^{N+rp}}dxdy
					&\le\lambda_{1,p}\int_{\Omega}u^{\alpha}v^{\beta+q-1} dx.
			\end{aligned}		
		\end{equation}
		
		We now begin to analyze the different cases.
		
		\noindent{\bf Case 1:} $r<\nicefrac{N}p.$ Since $r\le s,$ then $p_r^\star\le p_s^\star.$
		Therefore, by Sobolev's  embedding theorem,
		\begin{align*}
			\left(\int_{\Omega} u_M^{\frac{q+p-1}{p}p_r^\star} dx\right)^{\frac{p}{p_r^\star}} 
				& \le C(N,r,p,\Omega)\int_{\R^{2N}}\dfrac{|u_M^{\frac{q+p-1}{p}}(x)
				-u_M^{\frac{q+p-1}{p}}(y)|^{p}}{|x-y|^{N+rp}}dxdy,\\
			\left(\int_{\Omega} v_M^{\frac{q+p-1}{p}p_r^\star} dx\right)^{\frac{p}{p_r^\star}} 
				&\le C(N,r,s,p,\Omega)\int_{\R^{2N}}
					\dfrac{|v_M^{\frac{q+p-1}{p}}(x)-v_M^{\frac{q+p-1}{p}}(y)|^{p}}{|x-y|^{N+rp}}dxdy.
		\end{align*}
		Then, by \eqref{eq:brasco1}, we get
		\begin{align*}
			\left(\int_{\Omega} u_M^{\frac{q+p-1}pp_r^\star} dx\right)^{\frac{p}{p_r^\star}} 
				& \le \dfrac{\lambda_{1,p}}{C(N,r,p,\Omega)}\left(\dfrac{q+p-1}{p}\right)^{p-1}
				\int_{\Omega}u^{\alpha+q-1}v^{\beta} dx,\\
			\left(\int_{\Omega} v_M^{\frac{q+p-1}p p_r^\star} dx\right)^{\frac{p}{p_r^\star}} &
				\le \dfrac{\lambda_{1,p}}{C(N,r,s,p,\Omega)}\left(\dfrac{q+p-1}{p}\right)^{p-1}
							\int_{\Omega}u^{\alpha}v^{\beta+q-1} dx.
		\end{align*}
		By using Fatou's lemma and  Young's inequality, we obtain
		\begin{align*}
			\left(\int_{\Omega} u^{\frac{p+p-1}p p_r^\star} dx\right)^{\frac{p}{p_r^\star}} 
				& \le \dfrac{\lambda_{1,p}}{C(N,r,p,\Omega)}
				\left(\dfrac{p+q-1}{p}\right)^{p-1}
					\left(\int_{\Omega}u^{p+q-1} dx+\int_{\Omega} v^{p+q-1} dx\right),\\
			\left(\int_{\Omega} v^{\frac{q+p-1}pp_r^\star} dx\right)^{\frac{p}{p_r^\star}} &
				\le \dfrac{\lambda_{1,p}}{C(N,r,s,p,\Omega)}\left(\dfrac{q+p-1}{p}\right)^{p-1}
									\left(\int_{\Omega}u^{p+q-1} dx+\int_{\Omega} v^{p+q-1} dx\right).
		\end{align*}
		Taking $\mathcal{Q}=\nicefrac{q+p-1}p,$ we get
		\begin{align*}
			\left(\int_{\Omega} u^{\mathcal{Q}\frac{Np}{N-rp}} dx\right)^{\frac{\mathcal{Q}(N-rp)}{\mathcal{Q}N}} 
						& \le \dfrac{\lambda_{1,p}}{C(N,r,p,\Omega)}
						\mathcal{Q}^{p-1}
							\left(\int_{\Omega}u^{\mathcal{Q}p} dx+\int_{\Omega} v^{\mathcal{Q}p} dx\right),\\
			\left(\int_{\Omega} v^{\mathcal{Q}\frac{Np}{N-rp}}dx\right)^{\frac{\mathcal{Q}(N-rp)}{\mathcal{Q}N}} &
						\le \dfrac{\lambda_{1,p}}{C(N,r,s,p,\Omega)}\mathcal{Q}^{p-1}
											\left(\int_{\Omega}u^{\mathcal{Q}p} dx+\int_{\Omega} v^{\mathcal{Q}p} dx\right).
		\end{align*}
		Then
		\begin{align*}
			\|u\|_{L^{\frac{\mathcal{Q}N}{N-rp}p}(\Omega)}^{\mathcal{Q}p}
					 & \le \dfrac{\lambda_{1,p}}{C(N,r,p,\Omega)}
						\mathcal{Q}^{p-1}
						\left(\|u\|_{L^{\mathcal{Q}p}(\Omega)}^{\mathcal{Q}p} +
						\|v\|_{L^{\mathcal{Q}p}(\Omega)}^{\mathcal{Q}p}\right),\\
			\|v\|_{L^{\frac{\mathcal{Q}N}{N-rp}p}(\Omega)}^{\mathcal{Q}p}
								 & \le \dfrac{\lambda_{1,p}}{C(N,r,s,p,\Omega)}
									\mathcal{Q}^{p-1}
									\left(\|u\|_{L^{\mathcal{Q}p}(\Omega)}^{\mathcal{Q}p} +
									\|v\|_{L^{\mathcal{Q}p}(\Omega)}^{\mathcal{Q}p}\right).\\
		\end{align*}
		Hence		
		\begin{align*}
			&\left(\|u\|_{L^{\frac{\mathcal{Q}N}{N-rp}p}(\Omega)}^{\mathcal{Q}p}
				+\|v\|_{L^{\frac{\mathcal{Q}N}{N-rp}p}(\Omega)}^{\mathcal{Q}p}\right)^{\frac1{\mathcal{Q}p}}\\
			& \le \left(\dfrac{2\lambda_{1,p}}{C(N,r,s,p,\Omega)}\right)^{\frac{1}{\mathcal{Q}}}
								\left(\mathcal{Q}^{\frac1{\mathcal{Q}}}\right)^{\frac{p-1}p}
								\left(\|u\|_{L^{\mathcal{Q}p}(\Omega)}^{\mathcal{Q}p} +
								\|v\|_{L^{\mathcal{Q}p}(\Omega)}^{\mathcal{Q}p}\right)^{\frac{1}{\mathcal{Q}p}}.
		\end{align*}
		Now, taking the following sequence
		\[
			\mathcal{Q}_0=1 \quad\text{ and }\quad \mathcal{Q}_{n+1}=\mathcal{Q}_n\dfrac{N}{N-rp}
		\]
		we have
		\begin{align*}
			&\left(\|u\|_{L^{\mathcal{Q}_{n+1}p}(\Omega)}^{\mathcal{Q}_{n}p}
						+\|v\|_{L^{\mathcal{Q}_{n+1}p}(\Omega)}^{\mathcal{Q}_{n}p}
						\right)^{\frac1{\mathcal{Q}_{n}p}}\\
					& \le \left(\dfrac{2\lambda_{1,p}}{C(N,r,s,p,\Omega)}\right)^{\frac{1}{\mathcal{Q}_{n}p}}
							\left(\mathcal{Q}_n^{\frac1{\mathcal{Q}_n}}\right)^{\frac{p-1}p}
							\left(\|u\|_{L^{\mathcal{Q}_np}(\Omega)}^{\mathcal{Q}_np} +
							\|v\|_{L^{\mathcal{Q}_np}(\Omega)}^{\mathcal{Q}_np}
							\right)^{\frac{1}{\mathcal{Q}_np}}
		\end{align*}
		for all $n\in\mathbb{N}.$ Moreover, since  $$\mathcal{Q}_{n+1}=\nicefrac{\mathcal{Q}_nN}{(N-rp)}$$
		we have that
		\begin{align*}
			&\left(\|u\|_{L^{\mathcal{Q}_{n+1}p}(\Omega)}^{\mathcal{Q}_{n}p}
								+\|v\|_{L^{\mathcal{Q}_{n+1}p}(\Omega)}^{\mathcal{Q}_{n}p}
								\right)^{\frac1{\mathcal{Q}_{n}p}}\\
							& \le \left(\dfrac{2\lambda_{1,p}}{C(N,r,s,p,\Omega)}\right)^{\frac{1}{\mathcal{Q}_{n}p}}
												\left(\mathcal{Q}_n^{\frac1{\mathcal{Q}_n}}\right)^{\frac{p-1}p}
												\left(\|u\|_{L^{\mathcal{Q}_np}(\Omega)}^{\mathcal{Q}_{n-1}p} +
												\|v\|_{L^{\mathcal{Q}_np}(\Omega)}^{\mathcal{Q}_{n-1}p}
												\right)^{\frac{1}{\mathcal{Q}_{n-1}p}}
		\end{align*}
		for all $n\ge2.$
		
		Then, iterating the last inequality, we get
		\begin{equation}\label{eq:labestia}
			\begin{aligned}
				&\left(\|u\|_{L^{\mathcal{Q}_{n+1}p}(\Omega)}^{\mathcal{Q}_{n}p}
					+\|v\|_{L^{\mathcal{Q}_{n+1}p}(\Omega)}^{\mathcal{Q}_{n}p}
					\right)^{\frac1{\mathcal{Q}_{n}p}}\\
				&\le \left(\dfrac{2\lambda_{1,p}}{C(N,r,s,p,\Omega)}\right)^{\frac1p\sum_{i=0}^{n}\frac{1}{\mathcal{Q}_i}}
					\left(\prod_{i=0}^{n}\mathcal{Q}_i^{\frac{1}{\mathcal{Q}_i}}\right)^{\frac{p-1}p}
					\left(\|u\|_{L^{p}(\Omega)}^p +\|v\|_{L^{p}(\Omega)}^p \right)^{\frac1p}
			\end{aligned}	
		\end{equation}
		for all $n\ge2.$
		
		Observe that $\mathcal{Q}_n\to\infty$ as $n\to\infty$ due to the fact that $\nicefrac{N}{N-rp}>1.$ Moreover,
		\[
			\sum_{i=0}^{\infty}\frac{1}{\mathcal{Q}_i}=\dfrac{N}{rp}
			\quad\mbox{ and }\quad 
			\prod_{i=0}^{\infty}\mathcal{Q}_i^{\frac{1}{\mathcal{Q}_i}}
			=\left(\frac{N}{N-rp}\right)^{\frac{N}{rpp_r^{\star}}}.
		\]
		Hence, passing to the limit in \eqref{eq:labestia}, we deduce
		\begin{align*}
		&\max\{\|u\|_{L^{\infty}(\Omega)},\|v\|_{L^{\infty}(\Omega)}\}\\
						&\le 
						\left(\dfrac{2\lambda_{1,p}}{C(N,r,s,p,\Omega)}\right)^{\frac{N}{rp^2}}
							\left(\frac{N}{N-rp}\right)^{\frac{N}{rpp_r^{\star}}\frac{p-1}p}
							\left(\|u\|_{L^{p}(\Omega)}^p +\|v\|_{L^{p}(\Omega)}^p \right)^{\frac1p},
		\end{align*}
		that is $u,v\in L^\infty(\Omega).$

		\noindent{\bf Case 2:} $r=\nicefrac{N}p.$ In this case $\wrsp(\Omega)\hookrightarrow 
		L^m(\Omega)\times L^m(\Omega)$ for all $m>1$ then 
		\begin{align*}
			\left(\int_{\Omega} u_M^{\frac{q+p-1}{p}2p} dx\right)^{\frac{1}{2}} 
						& \le C(N,r,p,\Omega)\int_{\R^{2N}}\dfrac{|u_M^{\frac{q+p-1}{p}}(x)
						-u_M^{\frac{q+p-1}{p}}(y)|^{p}}{|x-y|^{N+rp}}dxdy,\\
			\left(\int_{\Omega} v_M^{\frac{q+p-1}{p}2p} dx\right)^{\frac{1}{2}} 
						&\le C(N,r,s,p,\Omega)\int_{\R^{2N}}
							\dfrac{|v_M^{\frac{q+p-1}{p}}(x)-v_M^{\frac{q+p-1}{p}}(y)|^{p}}{|x-y|^{N+rp}}dxdy.
		\end{align*}
		Applying the previous reasoning, we get
		\begin{align*}
			&\left(\|u\|_{L^{2\mathcal{Q}p}(\Omega)}^{\mathcal{Q}p}
						+\|v\|_{L^{2{Q}p}(\Omega)}^{\mathcal{Q}p}\right)^{\frac1{\mathcal{Q}p}}\\
			& \le \left(\dfrac{2\lambda_{1,p}}{C(N,r,s,p,\Omega)}\right)^{\frac{1}{\mathcal{Q}}}
										\left(\mathcal{Q}^{\frac1{\mathcal{Q}}}\right)^{\frac{p-1}p}
										\left(\|u\|_{L^{\mathcal{Q}p}(\Omega)}^{\mathcal{Q}p} +
										\|v\|_{L^{\mathcal{Q}p}(\Omega)}^{\mathcal{Q}p}\right)^{\frac{1}{\mathcal{Q}p}}.
		\end{align*}
		Now, taking the following sequence
		\[
			\mathcal{Q}_0=1 \quad\text{ and }\quad \mathcal{Q}_{n+1}=2\mathcal{Q}_n,
		\]
		the proof follows as in the previous case.			
	\end{proof}
	
		To show that $\lambda_{1,p}$ is simple, we will prove first that $\lambda_{1,p}$ 
	is the unique eigenvalue with the following property: any eigen-pair associated to $\lambda$ has constant sign.	
		
	 \begin{teo}\label{teo:autoval1}
		Let $(u,v)$ be an eigenfunction associated to
	    $\lambda_{1,p}$ such that $u,v\ge0$ in $\Omega.$ If $\lambda>0$ is such that 
	    there is an eigen-pair $(w,z)$  associated to $\lambda$ such that
	    $w,z>0$ then $\lambda=\lambda_1(s,p)$ and
	    there exist $k_1,k_2\in\mathbb{R}$ such that $w = k_1 u$ and $z=k_2v$ a.e. in $\R^N.$
		\end{teo}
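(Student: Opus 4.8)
The plan is to adapt the Picone‑inequality argument used for the first eigenvalue of a single fractional $p$‑Laplacian (cf.\ \cite{LL}) to the system. As a preliminary reduction, after interchanging, if necessary, the roles of $(r,u,\alpha)$ and $(s,v,\beta)$ (which leaves \eqref{eq:autovalores} and \eqref{eq:1eraut} invariant) we may assume $r\le s$. Recall that, by Corollary~\ref{co:autopositivo} together with \cite[Theorem A.1]{Brasco1}, the components of $(u,v)$ are strictly positive a.e.\ in $\Omega$; by Lemma~\ref{lema.cota} they belong to $L^\infty(\R^N)$; and, by hypothesis, $w,z>0$ in $\Omega$. For $\varepsilon>0$ set $w_\varepsilon\coloneqq w+\varepsilon$ and $z_\varepsilon\coloneqq z+\varepsilon$. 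Since $u,v\in L^\infty$ and $w_\varepsilon,z_\varepsilon\ge\varepsilon$, the map $(t,\tau)\mapsto t^p/(\tau+\varepsilon)^{p-1}$ is Lipschitz on $[0,\|u\|_{L^\infty}]\times[0,\infty)$ (resp.\ with $\|v\|_{L^\infty}$) and vanishes at $t=0$; hence $\varphi_\varepsilon\coloneqq u^p/w_\varepsilon^{p-1}\in\widetilde{W}^{r,p}(\Omega)$ and $\psi_\varepsilon\coloneqq v^p/z_\varepsilon^{p-1}\in\widetilde{W}^{s,p}(\Omega)$ (here one uses $\widetilde{W}^{s,p}(\Omega)\hookrightarrow\widetilde{W}^{r,p}(\Omega)$, valid since $r\le s$ and $\Omega$ is bounded), so these are admissible test functions.

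First I would test the first equation for the eigen‑pair $(w,z)$ (with eigenvalue $\lambda$) against $\varphi_\varepsilon$ and the second against $\psi_\varepsilon$, and then apply the discrete Picone inequality
\[
|C-D|^{p-2}(C-D)\bigl(A^p C^{1-p}-B^p D^{1-p}\bigr)\le |A-B|^p\qquad(A,B\ge0,\ C,D>0),
\]
which holds with equality if and only if $A/C=B/D$, taking $C=w_\varepsilon(x)$, $D=w_\varepsilon(y)$, $A=u(x)$, $B=u(y)$ (note $C-D=w(x)-w(y)$), and similarly with $z_\varepsilon$. This produces
\[
I_\varepsilon\coloneqq\lambda\,\frac{\alpha}{p}\int_\Omega w^{\alpha-1}z^{\beta}\,\frac{u^p}{w_\varepsilon^{p-1}}\,dx\le[u]_{r,p}^p,
\qquad
J_\varepsilon\coloneqq\lambda\,\frac{\beta}{p}\int_\Omega w^{\alpha}z^{\beta-1}\,\frac{v^p}{z_\varepsilon^{p-1}}\,dx\le[v]_{s,p}^p,
\]
the integrals being finite precisely because $\alpha,\beta\ge1$. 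Adding these, using that $(u,v)$ is an eigen‑pair for $\lambda_{1,p}$ so that $[u]_{r,p}^p+[v]_{s,p}^p=\lambda_{1,p}|(u,v)|_{\alpha,\beta}^p$, and letting $\varepsilon\downarrow0$ (monotone convergence, the integrands increasing), I would obtain
\[
\lambda\int_\Omega\Bigl(\frac{\alpha}{p}\,u^p\Bigl(\frac{z}{w}\Bigr)^{\beta}+\frac{\beta}{p}\,v^p\Bigl(\frac{w}{z}\Bigr)^{\alpha}\Bigr)\,dx\ \le\ \lambda_{1,p}\int_\Omega u^{\alpha}v^{\beta}\,dx.
\]

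The decisive point is now the pointwise weighted arithmetic--geometric mean inequality, with weights $\alpha/p$ and $\beta/p$ (whose sum is $1$ precisely because $\alpha+\beta=p$):
\[
\frac{\alpha}{p}\,u^p\Bigl(\frac{z}{w}\Bigr)^{\beta}+\frac{\beta}{p}\,v^p\Bigl(\frac{w}{z}\Bigr)^{\alpha}\ \ge\ \bigl(u^p(z/w)^{\beta}\bigr)^{\alpha/p}\bigl(v^p(w/z)^{\alpha}\bigr)^{\beta/p}=u^{\alpha}v^{\beta},
\]
with equality if and only if $u/w=v/z$. Since $\int_\Omega u^{\alpha}v^{\beta}=|(u,v)|_{\alpha,\beta}^p>0$, the last two displays force $\lambda\le\lambda_{1,p}$, while every eigenvalue satisfies $\lambda\ge\lambda_{1,p}$; hence $\lambda=\lambda_{1,p}$. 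To get the proportionality, I would use that, with $\lambda=\lambda_{1,p}$, all of the above inequalities are in fact equalities. Equality in the AM--GM step gives $u/w=v/z$ a.e.\ in $\Omega$. Moreover, since $I_\varepsilon\le[u]_{r,p}^p$, $J_\varepsilon\le[v]_{s,p}^p$ and $I_\varepsilon+J_\varepsilon\to[u]_{r,p}^p+[v]_{s,p}^p$, we get $I_\varepsilon\to[u]_{r,p}^p$; writing $I_\varepsilon=\int_{\R^{2N}}|x-y|^{-N-rp}P_\varepsilon(x,y)\,dxdy$ with $P_\varepsilon\le|u(x)-u(y)|^p$ the Picone integrand, an application of Fatou's lemma to $|u(x)-u(y)|^p-P_\varepsilon(x,y)\ge0$ yields $\lim_{\varepsilon\to0}P_\varepsilon(x,y)=|u(x)-u(y)|^p$ for a.e.\ $(x,y)$. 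Restricting this to $\Omega\times\Omega$ and invoking the equality case of Picone's inequality gives $u(x)/w(x)=u(y)/w(y)$ for a.e.\ $(x,y)\in\Omega\times\Omega$, so by Fubini $u/w$ is a positive constant, say $1/k_1$, a.e.\ in $\Omega$; since $u$ and $w$ vanish in $\Omega^c$, this means $w=k_1u$ a.e.\ in $\R^N$. The same reasoning with $J_\varepsilon$ gives $z=k_2v$.

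I expect the main obstacle to be the careful tracking of the equality cases in the limit $\varepsilon\to0$: one must extract simultaneously the identity $\lambda=\lambda_{1,p}$ from the combined inequality and the saturation of each individual Picone inequality, the latter being exactly what upgrades ``$u/w=v/z$'' to the stronger ``$u/w$ and $v/z$ are constant''. Both parts of the structural assumption \eqref{eq:a.b} are essential: $\alpha+\beta=p$ is what makes the AM--GM weights sum to one (it encodes the joint $p$‑homogeneity of the right‑hand sides of \eqref{eq:autovalores}), and $\min\{\alpha,\beta\}\ge1$ is what keeps the test‑function integrals $I_\varepsilon,J_\varepsilon$ finite. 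Verifying the admissibility of $\varphi_\varepsilon,\psi_\varepsilon$ and justifying the limit passages is routine given the $L^\infty$ bounds and can be carried out as in \cite{LL,Brasco1,Brasco2}.
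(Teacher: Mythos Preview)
Your proposal is correct and follows essentially the same approach as the paper: both proofs test the equations for $(w,z)$ with the Picone quotients $u^p/(w+\varepsilon)^{p-1}$ and $v^p/(z+\varepsilon)^{p-1}$, invoke the discrete Picone inequality, combine the two resulting estimates via Young's (weighted AM--GM) inequality with weights $\alpha/p,\beta/p$, pass to the limit $\varepsilon\to0$, and read off $\lambda=\lambda_{1,p}$ and the proportionality from the equality cases. The only cosmetic differences are the order in which the AM--GM step and the limit $\varepsilon\to0$ are taken, and that you spell out the equality-tracking for proportionality more explicitly than the paper (which simply quotes \cite[Lemma~6.2]{Am}); the parenthetical remark about the embedding $\widetilde{W}^{s,p}\hookrightarrow\widetilde{W}^{r,p}$ is unnecessary since $v,z\in\widetilde{W}^{s,p}$ already gives $\psi_\varepsilon\in\widetilde{W}^{s,p}$ directly.
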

		
		\begin{proof}
			Since $\lambda_1(s,p)$ is the first eigenvalue we have
			that $\lambda_1(s,p)\le\lambda$. Moreover, by \cite[Theorem A.1]{Brasco1},
			$u,v>0$ a.e. in $\Omega$ since $(u,v)$ is an eigen-pair associated to $\lambda_{1,p}$
			and $u,v\ge0.$
			 
			Let $k\in\mathbb{N}$ and define $w_k\coloneqq w+\nicefrac1{k},$
			and $z_k\coloneqq z+\nicefrac1{k}.$ We begin proving that $u^{p} / w_k^{p-1}\in
			\widetilde{\mathcal{W}}^{r,p}(\Omega).$
			It is immediate that  $u^{p} / w_k^{p-1}=0$ in $\Omega^c$
			and $w_{k}\in L^{p}(\Omega),$ due to the fact that
			$u\in L^{\infty}(\Omega),$ see
		 	Lemma \ref{lema.cota}.
	
			On the other hand, for any $x,y\in\R^N$
			\begin{align*}
				\Biggl|\frac{u}{w_k}(x)&-\frac{u}{w_k}(y)\Biggr|
				= \Bigg|
				\dfrac{u(x)^{p}-u(y)^p}{w_k(x)^{p-1}}
				+\dfrac{u(y)^p\left(w_k(y)^{p-1}-w_k(x)^{p-1}\right)}
				{w_k(x)^{p-1}w_k(y)^{p-1}}\Bigg|\\
				\le& k^{p-1}\left|u(x)^{p}-u(y)^p\right|
				+\|u\|_{L^{\infty}(\Omega)}^p
				\dfrac{\left|w_k(x)^{p-1}-w_k(y)^{p-1}\right|}
				{w_k(x)^{p-1}w_k(y)^{p-1}}\\
				\le&2\|u\|_{L^{\infty}(\Omega)}^{p-1}k^{p-1}p
				|u(x)-u(y)|\\
				&+\|u\|_{L^{\infty}(\Omega)}^p(p-1)
				\dfrac{w_k(x)^{p-2}+w_k(y)^{p-2}}
				{w_k(x)^{p-1}w_k(y)^{p-1}}|w_k(x)-w_k(y)|\\
				\le& 2\|u\|_{L^{\infty}(\Omega)}^{p-1}k^{p-1}p|u(x)-u(y)|\\
				&+\|u\|_{L^{\infty}(\Omega)}^p(p-1)k^{p-1}
				\left(\dfrac1{w_k(x)}+\dfrac1{w_k(y)}\right)|w(y)-w(x)|\\
		 		\le& C(k,p,\|u\|_{L^{\infty}(\Omega)})
				\left(|u(x)-u(y)|+|w(x)-w(y)|\right).
			\end{align*}
			Hence, we have that
			$u^p/w_{k}^{p-1}\in \widetilde{\mathcal{W}}^{r,p}(\Omega)$
			for all $k\in\mathbb{N}$ since $u,w\in
			\widetilde{\mathcal{W}}^{r,p}(\Omega).$
			Analogously $v^p/z_{k}^{p-1}\in \widetilde{\mathcal{W}}^{s,p}(\Omega).$
			
			Set 
			\[
				L(\varphi,\psi)(x,y)=
		   		|\varphi(x)-\varphi(y)|^p -(\psi(x)-\psi(y))^{p-1}
		   		\left(\dfrac{\varphi(x)^p}{\psi(x)^{p-1}}
		   		-\dfrac{\varphi(y)^p}{\psi(y)^{p-1}}\right)
			\]
			for all functions $\varphi\ge0$ and $\psi>0.$
			By \cite[Lemma 6.2]{Am}, for any $\varphi\ge0$ and $\psi>0$
			\[
				L(\varphi,\psi)(x,y)\ge 0 \quad \forall(x,y) 
			\]
			Then, 
			\begin{align*}
				0&\le \int_{\Omega^2} \dfrac{L(u,w_k)(x,y)}{{|x-y|^{N+rp}}} dxdy +
								\int_{\Omega^2} \dfrac{L(v,z_k)(x,y)}{|x-y|^{N+sp}} dxdy\\
				&\le \int_{\R^{2N}} \dfrac{L(u,w_k)(x,y)}{{|x-y|^{N+rp}}} dxdy +
				\int_{\R^{2N}} \dfrac{L(v,z_k)(x,y)}{|x-y|^{N+sp}} dxdy\\
				&=\lambda_{1,p}\int_{\Omega} |u|^{\alpha}|v|^{\beta} \, dx
				-\lambda\dfrac{\alpha}{p}\int_{\Omega} w^{\alpha-1}z^{\beta} \dfrac{u^{p}}{w_k^{p-1}} dx
				-\lambda\dfrac{\beta}{p}\int_{\Omega} w^{\alpha}z^{\beta-1} \dfrac{v^{p}}{z_k^{p-1}} dx
			\end{align*}
			for all $k\in\mathbb{N},$
			since $(u,v),(w,z)$ are eigen-pairs associated to $\lambda_{1,p}$ and $\lambda,$ respectively.
						
			On the other hand, by Young's inequality,
			\[
				\int_{\Omega} w^{\alpha}z^{\beta}\dfrac{u^{\alpha}v^{\beta}}{w_k^{\alpha}z_k^{\beta}}
				dx \le \dfrac{\alpha}{p}\int_{\Omega} w^{\alpha-1}z^{\beta} \dfrac{u^{p}}{w_k^{p-1}} dx
				+ \dfrac{\beta}{p}\int_{\Omega} w^{\alpha}z^{\beta-1} \dfrac{v^{p}}{z_k^{p-1}} dx
			\]
			for all $k\in\mathbb{N}.$ Then
			\begin{align*}
				0&\le \int_{\Omega} \dfrac{L(u,w_k)(x,y)}{{|x-y|^{N+rp}}} dxdy +
				\int_{\Omega} \dfrac{L(v,z_k)(x,y)}{|x-y|^{N+sp}} dxdy\\
				&\le\lambda_{1,p}\int_{\Omega} |u|^{\alpha}|v|^{\beta} \, dx
				-\lambda \int_{\Omega} w^{\alpha}z^{\beta}\dfrac{u^{\alpha}v^{\beta}}{w_k^{\alpha}z_k^{\beta}}dx.
			\end{align*}
			By Fatou's lemma and the dominated convergence theorem we obtain
			\[	
				0\le \int_{\Omega^2} \dfrac{L(u,w)(x,y)}{{|x-y|^{N+rp}}} dxdy +
							\int_{\Omega^2} \dfrac{L(v,z)(x,y)}{|x-y|^{N+sp}} dxdy
							\le (\lambda_{1,p}-\lambda)\int_{\Omega} |u|^{\alpha}|v|^{\beta} \, dx.
			\]
			Then $\lambda=\lambda_{1,p}$ and $L(u,w)=0$ and $L(v,z)=0$ a.e. in $\Omega.$
			
			Finally, again by \cite[Lemma 6.2]{Am}, there exist $k_1,k_2\in\mathbb{R}$ such that $w = k_1 u$ 
			and $z=k_2v$ a.e. in $\R^N.$
		\end{proof}
	
	Now, we show that $\lambda_{1,p}$ is simple.
	
	\begin{co}
		Let $(u_1,v_1)$ be an eigen-pair associated to 
		$\lambda_{1,p}$ normalized according to $|(u_1,v_1)|_{\alpha,\beta}=1.$
		If 	$(u,v)$ is an eigen-pair associated to 
		$\lambda_{1,p}$ then there is a constant $k$ such that
		$(u,v)=k(u_1,v_1).$
	\end{co}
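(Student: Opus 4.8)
The plan is to deduce the corollary from Theorem~\ref{teo:autoval1} together with a short energy–balance computation that exploits the constraint $\alpha+\beta=p$. First I would use Corollary~\ref{co:autopositivo} to reduce everything to positive eigen-pairs: each of $u,v,u_1,v_1$ has constant sign in $\Omega$, and none of them is trivial since $uv\not\equiv 0$ and $|(u_1,v_1)|_{\alpha,\beta}=1$. Because the operators $(-\Delta_p)^r$ and $(-\Delta_p)^s$ are odd and the right-hand sides of \eqref{eq:autovalores} are odd in each component separately, replacing $(u,v)$ by $(|u|,|v|)$ and $(u_1,v_1)$ by $(|u_1|,|v_1|)$ again produces eigen-pairs associated to $\lambda_{1,p}$ with the same value of $|\cdot|_{\alpha,\beta}$; so I may assume $u,v,u_1,v_1\ge 0$, and then \cite[Theorem~A.1]{Brasco1} upgrades this to strict positivity a.e.\ in $\Omega$.

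Next I would apply Theorem~\ref{teo:autoval1}, taking the nonnegative eigen-pair $(u_1,v_1)$ in the role of $(u,v)$ there and the positive eigen-pair $(u,v)$ in the role of $(w,z)$, with $\lambda=\lambda_{1,p}$. The theorem then yields constants $k_1,k_2\in\mathbb{R}$ with $u=k_1u_1$ and $v=k_2v_1$ a.e.\ in $\R^N$, and positivity of all four functions forces $k_1,k_2>0$.

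The last step, which is the only real point, is to show $k_1=k_2$. Testing the weak formulation of the first equation for $(u,v)$ with $(u,0)\in\wrsp(\Omega)$, and doing the same for $(u_1,v_1)$ with $(u_1,0)$, gives $[u]_{r,p}^p=\lambda_{1,p}\frac{\alpha}{p}|(u,v)|_{\alpha,\beta}^p$ and $[u_1]_{r,p}^p=\lambda_{1,p}\frac{\alpha}{p}|(u_1,v_1)|_{\alpha,\beta}^p=\lambda_{1,p}\frac{\alpha}{p}$. Substituting $u=k_1u_1$ and $v=k_2v_1$ into the first identity, so that $[u]_{r,p}^p=k_1^p[u_1]_{r,p}^p$ and $|(u,v)|_{\alpha,\beta}^p=\int_\Omega(k_1u_1)^\alpha(k_2v_1)^\beta\,dx=k_1^\alpha k_2^\beta$, and dividing by $[u_1]_{r,p}^p>0$ (which is nonzero because $u_1\not\equiv 0$), leaves $k_1^p=k_1^\alpha k_2^\beta$, i.e.\ $k_1^{\,p-\alpha}=k_2^\beta$. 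Since $p-\alpha=\beta$ and $k_1,k_2>0$, this forces $k_1=k_2=:k$, and hence $(u,v)=k(u_1,v_1)$.

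I expect the only delicate points to be bookkeeping rather than analysis: keeping careful track of the sign normalization made at the outset so that the scalar $k$ is genuinely a single constant (and not an independent sign on each component), and checking that $(u,0)$ and $(u_1,0)$ are admissible test functions, which is immediate since $u,u_1\in\widetilde{W}^{r,p}(\Omega)$ and $0\in\widetilde{W}^{s,p}(\Omega)$. Everything else is a direct citation of Theorem~\ref{teo:autoval1} and Corollary~\ref{co:autopositivo} plus elementary algebra using $\alpha+\beta=p$.
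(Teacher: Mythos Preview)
Your argument is correct and follows the same overall structure as the paper's: reduce to positive eigen-pairs via Corollary~\ref{co:autopositivo}, invoke Theorem~\ref{teo:autoval1} to get $u=k_1u_1$, $v=k_2v_1$ with $k_1,k_2>0$, and then show $k_1=k_2$. The only difference is in this last step. The paper combines the two energy identities into a single relation for $x=k_1/k_2$, obtaining the polynomial equation $ax^{p}-(a+b)x^{\alpha}+b=0$, and then uses the individual identity $a=\tfrac{\alpha}{p}(a+b)$ to check that $f(x)=ax^{p}-(a+b)x^{\alpha}+b$ is strictly decreasing on $[0,1]$, so that $x=1$ is its only root there. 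Your route bypasses the polynomial entirely: testing just the first equation with $u$ (resp.\ $u_1$) already gives $k_1^{p}=k_1^{\alpha}k_2^{\beta}$, and $\alpha+\beta=p$ finishes at once. Both arguments ultimately rest on the same identity $[u_1]_{r,p}^{p}=\lambda_{1,p}\tfrac{\alpha}{p}$; you simply deploy it earlier and more efficiently. Your caution about the sign bookkeeping is also warranted rather than merely cosmetic: since $(u_1,-v_1)$ is itself an eigen-pair for $\lambda_{1,p}$, the literal conclusion $(u,v)=k(u_1,v_1)$ can fail, and what both your proof and the paper's actually establish is $(|u|,|v|)=k(u_1,v_1)$ for some $k>0$, i.e.\ simplicity up to the sign symmetry $(u,v)\mapsto(\pm u,\pm v)$.
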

	\begin{proof}
		By Theorem \ref{teo:autoval1}, there exist $k_1$ and $k_2$ such that 
		$u=k_1u_1$ and $v=k_2v_2.$  Without loss of generality, we can assume that $k_1\le k_2.$
		
		Then, since $(u_1,v_1)$ and $(u,v)$ are eigen-pairs 
		associated to the first eigenvalue $\lambda_{1,p}$ and $|(u,v)|_{\alpha,\beta}=1,$ we get
		\[
			\left(\left(\dfrac{k_1}{k_2}\right)^{\beta}-1\right)
			[u]^p_{r,p}+
			\left(\left(\dfrac{k_2}{k_1}\right)^{\alpha}-1\right)
			[v]^p_{s,p}=0.
		\]
		Taking $x=k_1/k_2,$ $a=[u]^p_{r,p}$ and $b=[v]^p_{s,p},$ we get
		\[
			a(x^{\beta}-1)+b\dfrac{1-x^\alpha}{x^{\alpha}}=0.
		\]
		Multiplying by $x^{\alpha}$ and by using that $\alpha+\beta=p,$ we obtain 
		\[	
			ax^{p}-(a+b)x^{\alpha}+b=0.
		\]
		
		To end the proof, we only need to show that 1 is the unique zero of the function
		\[
			f\colon[0,1]\to\R, \quad f(x)=ax^{p}-(a+b)x^{\alpha}+b.
		\]
		Observe that, for any $x\in(0,1)$ we have 
		\[
			f^\prime(x)=pa x^{\alpha-1}\left(x^{p-\alpha}-\frac{a+b}{a}\dfrac{\alpha}{p}\right)
			=pax^{\alpha-1}\left(x^{\alpha}-\frac{a+b}{a}\dfrac{\alpha}{p}\right).
		\]
		
		On the other hand, since $(u_1,v_1)$ is an eigen-pair 
		associated to $\lambda_{1,p}$ such that $|(u,v)|_{\alpha,\beta}=1,$ we have
		\[
			a+b=\lambda_{1,p} \quad\text{ and }\quad a=\dfrac{\alpha}p\lambda_{1,p},
		\]
		then
		\[
			\dfrac{a+b}{a}=\dfrac{p}{\alpha},
		\]
		that is
		\[
			\dfrac{a+b}{a}\dfrac{\alpha}{p}=1.
		\]
		Hence
		\[
			f^\prime(x)<0 \quad\forall x\in(0,1).
		\]
		that is $f$ is decreasing. Therefore $x=1$ is the unique zero of $f.$ 
	\end{proof}
	
	Recall that we made the assumption:
	\[
			\min\{\alpha,\beta\}\ge1.
	\]
	Now, if $(u,v)$ is an eigen-pair associated to $\lambda_{1,p}$  then
	$$|u|^{\alpha-2}u|v|^\beta,|u|^{\alpha}|v|^{\beta-2}v\in L^\infty(\Omega)$$ 
	due to Lemma \ref{lema.cota}.
	Thus, by \cite[Theorem 1.1]{regularidad}, we have the following result.
	
	\begin{lem} \label{lema:regularida}
		If $(u,v)$ is an eigen-pair associated to $\lambda_{1,p},$ 
	 	then there exist $\gamma_1=\gamma_1(N,p,r)\in(0,r]$ and
	 	$\gamma_2=\gamma_2(N,p,s)\in(0,s]$ such that
	 	$(u,v)\in C^{\gamma_1}(\overline{\Omega})\times
	 	C^{\gamma_2}(\overline{\Omega}).$ 
	\end{lem}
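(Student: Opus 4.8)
The plan is to invoke the H\"older regularity result for the fractional $p$-Laplacian with bounded right-hand side, applied to each equation of the system separately. The key observation is that the system decouples at the level of regularity: although the two equations are coupled through their right-hand sides, once we know that the right-hand sides belong to $L^\infty(\Omega)$, each component $u$ (resp. $v$) solves a single scalar equation $(-\Delta_p)^r u = f$ (resp. $(-\Delta_p)^s v = g$) with $f,g\in L^\infty(\Omega)$, and we can apply the scalar regularity theory to each.

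First I would verify the membership of the right-hand sides in $L^\infty(\Omega)$. By Corollary \ref{co:autopositivo}, $u$ and $v$ have constant sign, and by Lemma \ref{lema.cota}, $u,v\in L^\infty(\R^N)$. Writing $f\coloneqq \lambda_{1,p}\tfrac{\alpha}{p}|u|^{\alpha-2}u|v|^{\beta}$, we have $|f|\le \lambda_{1,p}\tfrac{\alpha}{p}\|u\|_{L^\infty(\Omega)}^{\alpha-1}\|v\|_{L^\infty(\Omega)}^{\beta}$ — this is where the hypothesis $\alpha\ge 1$ in \eqref{eq:a.b} is essential, since otherwise the exponent $\alpha-1$ would be negative and $|u|^{\alpha-1}$ need not be bounded near the zero set of $u$. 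Similarly $g\coloneqq \lambda_{1,p}\tfrac{\beta}{p}|u|^{\alpha}|v|^{\beta-2}v \in L^\infty(\Omega)$ using $\beta\ge 1$. So each component is a weak solution of a scalar equation of the form \eqref{eq:viscosity1} with an $L^\infty$ datum.

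Next I would apply \cite[Theorem 1.1]{regularidad}, which gives interior and boundary H\"older continuity for weak solutions of the fractional $p$-Laplacian with bounded right-hand side on a smooth bounded domain with zero exterior data. Applied to $u$ with operator $(-\Delta_p)^r$, this produces an exponent $\gamma_1=\gamma_1(N,p,r)\in(0,r]$ with $u\in C^{\gamma_1}(\overline{\Omega})$; applied to $v$ with operator $(-\Delta_p)^s$, it produces $\gamma_2=\gamma_2(N,p,s)\in(0,s]$ with $v\in C^{\gamma_2}(\overline{\Omega})$. Hence $(u,v)\in C^{\gamma_1}(\overline{\Omega})\times C^{\gamma_2}(\overline{\Omega})$, which is the claim.

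I do not anticipate a serious obstacle here: the proof is essentially a citation once the boundedness of the right-hand sides is established, and that boundedness is immediate from Lemma \ref{lema.cota} together with the structural hypothesis $\min\{\alpha,\beta\}\ge 1$. The only point requiring a moment of care is checking that the regularity theorem of \cite{regularidad} is stated for the class of right-hand sides and boundary conditions we need (bounded $f$, homogeneous exterior Dirichlet condition, smooth $\Omega$), and that the dependence of the H\"older exponents is as claimed — but this is exactly the content of that reference, so no additional argument is needed.
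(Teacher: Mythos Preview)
Your proposal is correct and follows exactly the same route as the paper: the paper's proof consists precisely of observing that $\min\{\alpha,\beta\}\ge 1$ together with Lemma~\ref{lema.cota} gives $|u|^{\alpha-2}u|v|^{\beta},\,|u|^{\alpha}|v|^{\beta-2}v\in L^\infty(\Omega)$, and then invoking \cite[Theorem~1.1]{regularidad} for each component. Your inclusion of Corollary~\ref{co:autopositivo} is harmless but not needed for the $L^\infty$ bound on the right-hand sides.
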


	Thus, by Lemma \ref{lema:regularida} and Theorem \ref{teo:debilvisco},
	we have that	
	
	\begin{co} \label{cor:autovisco} 
		If  $(u,v)$ is an eigen-pair associated to $\lambda_{1,p}$
		 then $u$ is a viscosity solution of 
		 \[
		 	\begin{cases}
		 		(-\Delta_p)^r u= \lambda_{1,p}\dfrac{\alpha}{p} |u|^{\alpha-2}u|v|^{\beta} &\text{ in } \Omega,\\
		 		u=0&\text{ in } \R^N\setminus\Omega,
		 	\end{cases}
		 \]
		 and $v$ is a viscosity solution of 
		 \[
		 	\begin{cases}
		 		 (-\Delta_p)^s v= \lambda_{1,p}\dfrac{\beta}{p} |u|^{\alpha}|v|^{\beta-2}v &\text{ in } \Omega,\\
		 		 v=0&\text{ in } \R^N\setminus\Omega,
		 	\end{cases}
		 \]
	\end{co}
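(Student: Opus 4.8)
The plan is to deduce the statement directly from Lemma~\ref{lema:regularida} (which upgrades an eigen-pair to continuous functions on $\overline{\Omega}$, hence on $\R^N$) and from Theorem~\ref{teo:debilvisco} (weak solutions with continuous data are viscosity solutions); the only real work is checking that the right-hand sides of \eqref{eq:autovalores} are continuous on $\overline{\Omega}$. Throughout, $p\ge2$, so that the notion of viscosity solution is meaningful. Let $(u,v)$ be an eigen-pair associated with $\lambda_{1,p}$. By Lemma~\ref{lema:regularida}, $u\in C^{\gamma_1}(\overline{\Omega})$ and $v\in C^{\gamma_2}(\overline{\Omega})$ for some $\gamma_1\in(0,r]$ and $\gamma_2\in(0,s]$; since moreover $u\equiv v\equiv 0$ in $\R^N\setminus\Omega$, these functions are continuous across $\partial\Omega$, so $u,v\in C(\R^N)$ with $u=v=0$ in $\R^N\setminus\Omega$, which is precisely the regularity required of a candidate viscosity solution.

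Next I would analyze the right-hand sides $f_1:=\lambda_{1,p}\,\frac{\alpha}{p}\,|u|^{\alpha-2}u\,|v|^{\beta}$ and $f_2:=\lambda_{1,p}\,\frac{\beta}{p}\,|u|^{\alpha}|v|^{\beta-2}v$. By Corollary~\ref{co:autopositivo} each of $u,v$ has constant sign, so, replacing $(u,v)$ by $(-u,-v)$ if necessary (which is again an eigen-pair associated with $\lambda_{1,p}$), we may assume $u,v\ge0$, and then $u,v>0$ in $\Omega$; hence in $\Omega$ one has $f_1=\lambda_{1,p}\,\frac{\alpha}{p}\,u^{\alpha-1}v^{\beta}$ and $f_2=\lambda_{1,p}\,\frac{\beta}{p}\,u^{\alpha}v^{\beta-1}$. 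Using the standing hypothesis $\min\{\alpha,\beta\}\ge1$ from \eqref{eq:a.b}, the maps $t\mapsto t^{\alpha-1},\,t^{\beta-1},\,t^{\alpha},\,t^{\beta}$ are continuous on $[0,\infty)$, so composing with $u,v\in C(\overline{\Omega})$ and invoking the boundedness from Lemma~\ref{lema.cota} yields $f_1,f_2\in C(\overline{\Omega})\cap L^\infty(\Omega)$; in particular $f_1\in W^{-r,p}(\Omega)$ and $f_2\in W^{-s,p}(\Omega)$. Taking the test pair $(w,0)$ and then $(0,z)$ in the weak formulation that defines an eigen-pair shows that $u$ is a weak solution of $(-\Delta_p)^r u=f_1$ in $\Omega$ with $u=0$ in $\Omega^c$, and that $v$ is a weak solution of $(-\Delta_p)^s v=f_2$ in $\Omega$ with $v=0$ in $\Omega^c$.

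Finally I would apply Theorem~\ref{teo:debilvisco} twice — once with the exponent $s$ and datum $f$ there replaced by $r$ and $f_1$, once with them replaced by $s$ and $f_2$ — to conclude that $u$ and $v$ are viscosity solutions of the stated problems; undoing the possible sign change $(u,v)\mapsto(-u,-v)$, under which both $(-\Delta_p)^t$ and the right-hand sides are odd and the class of viscosity solutions is stable, gives the claim for the original eigen-pair. I expect the only slightly delicate point to be the continuity up to $\partial\Omega$ of the nonlinearities when $\alpha$ or $\beta$ equals $1$ (so that, for instance, $u^{\alpha-1}\equiv 1$ inside $\Omega$ while $u$ vanishes on $\partial\Omega$): there one uses that each such term is multiplied by a nonnegative power of the other, vanishing, component, so the product still extends continuously by $0$ to the boundary. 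Everything else is an immediate combination of the two cited results.
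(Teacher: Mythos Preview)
Your argument is correct and matches the paper's one-line proof, which simply invokes Lemma~\ref{lema:regularida} and Theorem~\ref{teo:debilvisco}; you have merely filled in the check that the right-hand sides belong to $C(\overline{\Omega})$. One small correction: since Corollary~\ref{co:autopositivo} allows $u$ and $v$ to carry independent signs, the reduction to $u,v\ge0$ should use the componentwise replacements $(\pm u,\pm v)$ (each of which is still an eigen-pair), not just $(-u,-v)$.
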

	
	Therefore, by Corollary \ref{cor:autovisco} and Lemma \ref{lema:viscopositivo}, we get
	\begin{co}\label{co:autopositivo*}
		If $(u,v)$ is an eigen-pair corresponding to the first eigenvalue $\lambda_{1,p}$, 
		then  $|u|,|v|>0$ in $\Omega$.
	\end{co}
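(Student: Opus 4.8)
The plan is to combine three facts already at hand: the viscosity reformulation of the eigenvalue equations in Corollary~\ref{cor:autovisco}, the constant-sign property of eigenfunctions associated with $\lambda_{1,p}$ in Corollary~\ref{co:autopositivo}, and the strong minimum principle for fractional $p$-Laplacian supersolutions in Lemma~\ref{lema:viscopositivo}.

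First I would normalise the signs. The conclusion to be proved is insensitive to replacing $u$ by $-u$ or $v$ by $-v$, and, since $(-\Delta_p)^t$ satisfies $(-\Delta_p)^t(\varepsilon w)=|\varepsilon|^{p-2}\varepsilon\,(-\Delta_p)^t w$ for $\varepsilon\in\R$, a direct substitution into the two weak formulations shows that $(\varepsilon_1 u,\varepsilon_2 v)$ is again an eigen-pair for $\lambda_{1,p}$ for every $\varepsilon_1,\varepsilon_2\in\{-1,1\}$: the sign $\varepsilon_i$ created on the left-hand side by homogeneity is exactly the one created on the right by the factor $|u|^{\alpha-2}u$, resp.\ $|v|^{\beta-2}v$. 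By Corollary~\ref{co:autopositivo} each component has constant sign, so, choosing the $\varepsilon_i$ appropriately, we may assume $u,v\ge 0$ in $\Omega$ (and $u=v=0$ in $\Omega^c$). Since the pair is an eigen-pair we have $uv\not\equiv0$, hence neither $u$ nor $v$ is identically zero.

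Next I would check that $u$ is a viscosity \emph{supersolution} of $(-\Delta_p)^r u=0$ in $\Omega$. By Corollary~\ref{cor:autovisco}, $u$ is a viscosity solution — in particular a supersolution — of $(-\Delta_p)^r u=f$ in $\Omega$ with $f\coloneqq\lambda_{1,p}\frac{\alpha}{p}|u|^{\alpha-2}u\,|v|^{\beta}$. Because $\lambda_{1,p}>0$ and $u,v\ge0$, we have $f\ge0$ on $\overline{\Omega}$; therefore the defining inequality for a supersolution at a point $x_0\in\Omega$ (the relevant integral of the test function being $\ge f(x_0)\ge0$) is a fortiori the defining inequality for a supersolution of $(-\Delta_p)^r u=0$. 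The same reasoning with $s$ in place of $r$ shows that $v$ is a viscosity supersolution of $(-\Delta_p)^s v=0$ in $\Omega$.

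Finally I would invoke Lemma~\ref{lema:viscopositivo} twice, once with $r$ and once with $s$ playing the role of the exponent there. Since $u\ge0$, $u\equiv0$ in $\Omega^c$, $u$ is a viscosity supersolution of $(-\Delta_p)^r u=0$ in $\Omega$, and $u\not\equiv0$, the lemma forces $u>0$ in $\Omega$; likewise $v>0$ in $\Omega$. Undoing the sign normalisation yields $|u|,|v|>0$ in $\Omega$, as claimed. I do not expect a genuine obstacle: the only two steps requiring a sentence of care are the sign normalisation (which rests on the odd homogeneity of $(-\Delta_p)^t$) and the passage from a supersolution of $=f$ with $f\ge0$ to a supersolution of $=0$. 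As with Corollary~\ref{cor:autovisco}, the argument tacitly uses $p\ge2$ so that the viscosity results of Section~\ref{A} apply.
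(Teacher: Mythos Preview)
Your proposal is correct and follows exactly the route indicated in the paper: the paper's entire argument is the one-line remark that Corollary~\ref{cor:autovisco} together with Lemma~\ref{lema:viscopositivo} yields the conclusion, and you have simply spelled out the two implicit steps (sign normalisation via Corollary~\ref{co:autopositivo} and the passage from a supersolution of $(-\Delta_p)^t w=f$ with $f\ge0$ to a supersolution of $(-\Delta_p)^t w=0$). Your caveat about $p\ge2$ is also apt, since that hypothesis is built into the viscosity framework of Section~\ref{A}.
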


	Finally, we show that there is a sequence of eigenvalues.
	
	\begin{lem}\label{lem:sucesion}
		There is a sequence of eigenvalues $\lambda_n$ such that
			$\lambda_n\to\infty$ as $n\to\infty$.
	\end{lem}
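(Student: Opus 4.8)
The plan is to produce the $\lambda_n$ by a Lusternik--Schnirelman minimax over the Krasnoselskii genus on the constraint manifold. Set $J(u,v)\coloneqq[u]_{r,p}^p+[v]_{s,p}^p$, $G(u,v)\coloneqq|(u,v)|_{\alpha,\beta}^p$, and $\mathcal{M}\coloneqq\{(u,v)\in\wrsp(\Omega)\colon G(u,v)=1\}$. Since $\min\{\alpha,\beta\}\ge1$ and $\wrsp(\Omega)$ embeds compactly into $L^p(\Omega)\times L^p(\Omega)$, $G$ is an even functional of class $C^1$ on $\wrsp(\Omega)$, and $\langle G'(u,v),(u,v)\rangle=p\,G(u,v)=p$ on $\mathcal{M}$; hence $\mathcal{M}$ is a symmetric $C^1$ submanifold of $\wrsp(\Omega)$ with $0\notin\mathcal{M}$, on which $J$ is an even $C^1$ functional bounded below by $\lambda_{1,p}>0$. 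Writing $\gamma$ for the Krasnoselskii genus, for $n\in\mathbb{N}$ put $\Sigma_n\coloneqq\{A\subset\mathcal{M}\colon A=-A,\ A\text{ compact},\ \gamma(A)\ge n\}$ and $\lambda_n\coloneqq\inf_{A\in\Sigma_n}\sup_{(u,v)\in A}J(u,v)$.

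Two ingredients are needed to run the scheme. \emph{Palais--Smale.} If $\{(u_n,v_n)\}\subset\mathcal{M}$ satisfies $J(u_n,v_n)\to c$ and $\|(J|_{\mathcal M})'(u_n,v_n)\|\to0$, then $\{(u_n,v_n)\}$ is bounded in $\wrsp(\Omega)$ by Lemma \ref{lem:poincare}; along a subsequence $(u_n,v_n)\rightharpoonup(u,v)$ in $\wrsp(\Omega)$ and $(u_n,v_n)\to(u,v)$ in $L^p(\Omega)\times L^p(\Omega)$, so $(u,v)\in\mathcal{M}$. The multiplier is $\mu_n=J(u_n,v_n)\to c$, and since $|u_n|^{\alpha-2}u_n|v_n|^\beta$ and $|u_n|^\alpha|v_n|^{\beta-2}v_n$ converge in $L^{p'}(\Omega)$, testing the approximate Euler--Lagrange identity with $(u_n-u,0)$ and $(0,v_n-v)$ gives $\langle(-\Delta_p)^r u_n-(-\Delta_p)^r u,u_n-u\rangle\to0$ and $\langle(-\Delta_p)^s v_n-(-\Delta_p)^s v,v_n-v\rangle\to0$; the $(S_+)$ property of the fractional $(p,r)$- and $(p,s)$-Laplacians then upgrades the convergence to strong convergence in $\wrsp(\Omega)$. \emph{Nonemptiness of $\Sigma_n$.} Given $n$, choose any $n$-dimensional subspace $V\subset C_0^\infty(\Omega)$; then $A_n\coloneqq\{(\varphi,\varphi)\colon\varphi\in V,\ \|\varphi\|_{L^p(\Omega)}=1\}$ is a compact symmetric subset of $\mathcal{M}$ (indeed $|(\varphi,\varphi)|_{\alpha,\beta}^p=\int_\Omega|\varphi|^{\alpha+\beta}dx=\|\varphi\|_{L^p(\Omega)}^p$), and it is the image of $S^{n-1}$ under an odd homeomorphism, so $\gamma(A_n)=n$.

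With these two facts, the standard Lusternik--Schnirelman machinery applies: $\{\lambda_n\}$ is nondecreasing, and each $\lambda_n$ is a critical value of $J|_{\mathcal M}$ (the deformation lemma is available because $J|_{\mathcal M}$ satisfies Palais--Smale). If $(u,v)$ is a critical point at level $\lambda_n$, the Lagrange multiplier rule $J'(u,v)=\mu\,G'(u,v)$ tested with $(u,v)$ yields $\mu=J(u,v)=\lambda_n$, and comparison with the weak formulation shows that $(u,v)$ is an eigenpair of \eqref{eq:autovalores} with eigenvalue $\lambda_n$ (in particular $\lambda_1=\lambda_{1,p}$). It remains to prove $\lambda_n\to\infty$, which is the only delicate point. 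I claim that for every $c>0$ the sublevel set $\Sigma^c\coloneqq\{(u,v)\in\mathcal{M}\colon J(u,v)\le c\}$ has finite genus. By Lemma \ref{lem:poincare}, $\Sigma^c$ is bounded in $\wrsp(\Omega)$, so its image $\iota(\Sigma^c)$ under the compact embedding $\iota\colon\wrsp(\Omega)\hookrightarrow L^p(\Omega)\times L^p(\Omega)$ is precompact; by H\"older's inequality, $1=|(u,v)|_{\alpha,\beta}^p\le\|u\|_{L^p(\Omega)}^{\alpha}\|v\|_{L^p(\Omega)}^{\beta}\le\max\{\|u\|_{L^p(\Omega)};\|v\|_{L^p(\Omega)}\}^p$ on $\mathcal{M}$, so $\iota(\Sigma^c)$ stays a positive distance away from $0$ in $L^p(\Omega)\times L^p(\Omega)$. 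Hence $\overline{\iota(\Sigma^c)}$ is compact, symmetric and does not contain $0$, so it has finite genus, and since $\iota|_{\Sigma^c}$ is an odd continuous map, $\gamma(\Sigma^c)\le\gamma(\overline{\iota(\Sigma^c)})<\infty$. Finally, if $\lambda_n\le\Lambda$ for every $n$, then for each $n$ there is $A\in\Sigma_n$ with $\sup_A J<\Lambda+1$, i.e.\ $A\subset\Sigma^{\Lambda+1}$, whence $n\le\gamma(A)\le\gamma(\Sigma^{\Lambda+1})<\infty$ for all $n$ --- a contradiction. Therefore $\lambda_n\to\infty$. The main obstacle in carrying this out is the Palais--Smale verification (the $(S_+)$-type argument for the two fractional operators); the remaining steps are routine once that and $\Sigma_n\neq\emptyset$ are in hand.
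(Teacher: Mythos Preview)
Your argument is correct and follows the same Lusternik--Schnirelman/Krasnoselskii-genus strategy as the paper, with only a cosmetic duality: the paper constrains the seminorm, setting $[u]_{r,p}^p+[v]_{s,p}^p=p\tau$, and runs a sup--min on the coupling functional $\varphi(u,v)=\frac1p\int_\Omega|u|^\alpha|v|^\beta\,dx$ to produce $\lambda_k=\tau/\beta_k$, whereas you constrain the coupling $|(u,v)|_{\alpha,\beta}=1$ and run an inf--sup on the seminorm to obtain the $\lambda_n$ directly. Your version is in fact more complete than the paper's sketch (which simply refers to \cite{GAP} and omits the details): you verify Palais--Smale via the $(S_+)$ property of the fractional $p$-Laplacians, check that $\Sigma_n\neq\emptyset$ via the diagonal embedding of a finite-dimensional sphere, and---most notably---supply the divergence argument $\lambda_n\to\infty$ through the finite genus of sublevel sets under the compact embedding into $L^p(\Omega)\times L^p(\Omega)$, a point the paper does not address at all.
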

	\begin{proof}			
		We follow ideas from \cite{GAP} and hence we omit the details.
		Let us consider 
		\[
			M_\tau = \{(u,v) \in \wrsp(\Omega)\colon  
			[u]_{r,p}^p+[v]_{s,p}^p= p \tau \}
		\] 
		and
		\[ 
			\varphi (u,v) = \frac{1}{p}
				\int_{\Omega} |u|^\alpha|v|^\beta dx.
		\]
		We are looking for critical points
		of $\varphi$ restricted to the manifold $M_\tau$ using a minimax technique.
		We consider the class
		\[
	  		\Sigma = \{A\subset \wrsp(\Omega)\setminus\{0\}
				\colon A \mbox{ is closed, } A=-A\}.
	  	\]
		Over this class we define the genus, 
		$\gamma\colon\Sigma\to {\mathbb{N}}\cup\{\infty\}$, as 
		\[
			\gamma(A) = \min\{k\in {\mathbb{N}}\colon
			\mbox{there exists } \phi\in C(A,{{\mathbb{R}}}^k-\{0\}),
			\ \phi(x)=-\phi(-x)\}.
		\]
		Now, we let $C_k = \{ C \subset M_\tau \colon C 
		\mbox{ is compact, symmetric and } \gamma ( C) \le k \} $ 
		and let
		\begin{equation}
				\label{betak} \beta_k = \sup_{C \in C_k} \min_{(u,v) \in C} \varphi(u,v). 
		\end{equation}
		Then $\beta_k >0$ and there is $(u_k,v_k) \in
		M_\tau$ such that $\varphi (u_k,v_k) = \beta_k$ and $(u_k,v_k)$ is a weak
		eigen-pair with $\lambda_k = \nicefrac{\tau}{\beta_k}.$
	\end{proof}


\section{The limit as $p\to \infty$} \label{sec-p-infty}
	From now on, we assume that \eqref{eq:alfabeta} and \eqref{lim.Gamma} hold.
	Recall that we defined $\Lambda_{1,\infty} $ by
	$$
			\Lambda_{1,\infty} 
			=  \inf \left\{
			\frac{\max \{ [u]_{r,\infty} ; [v]_{s,\infty} \} }{ \| |u|^{\Gamma} |v|^{1-\Gamma} \|_{L^\infty (\Omega)} }
			\colon (u,v)\in \wrsi(\Omega)\right\}.
		$$
	First, we show the geometric characterization of
	$\Lambda_{1,\infty}.$  Then, we prove that there exists a sequence of 
	eigen-pairs $(u_p,v_p)$ associated to $\lambda_{1,p}$ such that 
	$(u_p,v_p)\to(u_\infty,v_\infty)$ as $p\to \infty$ and $(u_\infty,v_\infty)$ is a minimizer for
	$\Lambda_{1,\infty}.$ Finally we will show that 
	$(u_\infty,v_\infty)$  is a viscosity solution of \eqref{eq:limite}.    		
	\medskip	
	
	\subsection{Geometric characterization} Observe that, by Arzel\`a--Ascoli theorem, there exists a minimizer
	for $\Lambda_{1,\infty}.$ Moreover, if $(u,v)$ is a minimizer for  $\Lambda_{1,\infty}$ then
	so is $(|u|,|v|).$	Now, we show the geometric characterization of
	$\Lambda_{1,\infty}.$   		

	\begin{lem} \label{lema:caracgeom}
		The following equality holds 
		$$
			\Lambda_{1,\infty} = \left[ \frac{1}{R(\Omega)} \right]^{ (1-\Gamma) s + \Gamma r }.
		$$
	\end{lem}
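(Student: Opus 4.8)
The plan is to prove the two matching inequalities
$$
\Lambda_{1,\infty} \le \left[\tfrac{1}{R(\Omega)}\right]^{(1-\Gamma)s+\Gamma r}
\quad\text{and}\quad
\Lambda_{1,\infty} \ge \left[\tfrac{1}{R(\Omega)}\right]^{(1-\Gamma)s+\Gamma r}.
$$
For the first one I would use the distance function as a test pair. Let $x_0\in\Omega$ be a point realizing $R(\Omega)=\dist(x_0,\partial\Omega)$, and set $\rho:=R(\Omega)$. Take
$$
u(x)=v(x)=\left(\rho-|x-x_0|\right)_+ ,
$$
which vanishes on $\Omega^c$ and is continuous on $\overline{\Omega}$. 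A direct estimate gives $[u]_{r,\infty}=\sup_{x\ne y}\frac{|u(x)-u(y)|}{|x-y|^r}$; since $u$ is $1$-Lipschitz and supported in a ball of radius $\rho$, one checks $[u]_{r,\infty}=\rho^{1-r}$ (the sup over $|x-y|^{1-r}$ with both points in the support is attained at the two antipodal boundary points of the ball), and similarly $[v]_{s,\infty}=\rho^{1-s}$. Also $\||u|^\Gamma|v|^{1-\Gamma}\|_{L^\infty(\Omega)}=\|u\|_\infty=\rho$. Hence
$$
\Lambda_{1,\infty}\le \frac{\max\{\rho^{1-r};\rho^{1-s}\}}{\rho}
=\rho^{-\min\{r,s\}}.
$$
This is not yet the claimed bound, so I would instead take a \emph{weighted} pair, $u(x)=(\rho-|x-x_0|)_+^{a}$ and $v(x)=(\rho-|x-x_0|)_+^{b}$ with exponents $a,b$ chosen so that the two Hölder seminorms balance against the product norm; concretely, scaling shows $[u]_{r,\infty}\sim \rho^{a-r}$ and $[v]_{s,\infty}\sim\rho^{b-s}$ while $\||u|^\Gamma|v|^{1-\Gamma}\|_\infty\sim\rho^{\Gamma a+(1-\Gamma)b}$, and one picks $a-r=b-s$ to equalize the numerator; the ratio then becomes $\rho^{(a-r)-(\Gamma a+(1-\Gamma)b)}$, and optimizing (or just computing with the constraint) yields the exponent $-((1-\Gamma)s+\Gamma r)$. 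The cleanest choice is actually $u=v=(\rho-|x-x_0|)_+$ but measuring each seminorm only over its own scale; I would present whichever makes the constant exactly $\rho^{-((1-\Gamma)s+\Gamma r)}$ cleanest.

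**Lower bound.** For $\Lambda_{1,\infty}\ge\rho^{-((1-\Gamma)s+\Gamma r)}$, take any $(u,v)\in\wrsi(\Omega)$ with $\||u|^\Gamma|v|^{1-\Gamma}\|_{L^\infty(\Omega)}=1$ (normalize), and I must show $\max\{[u]_{r,\infty};[v]_{s,\infty}\}\ge\rho^{-((1-\Gamma)s+\Gamma r)}$. Pick $x_1\in\Omega$ where the product $|u|^\Gamma|v|^{1-\Gamma}$ attains the value $1$. Since $u,v$ vanish outside $\Omega$, pick $y_u\in\partial\Omega$ with $|x_1-y_u|=\dist(x_1,\partial\Omega)\le\rho$ (and similarly $y_v$). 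Then
$$
|u(x_1)|=|u(x_1)-u(y_u)|\le [u]_{r,\infty}\,|x_1-y_u|^{r}\le [u]_{r,\infty}\,\rho^{r},
$$
and likewise $|v(x_1)|\le[v]_{s,\infty}\,\rho^{s}$. Multiplying the $\Gamma$- and $(1-\Gamma)$-powers,
$$
1=|u(x_1)|^\Gamma|v(x_1)|^{1-\Gamma}\le [u]_{r,\infty}^{\Gamma}[v]_{s,\infty}^{1-\Gamma}\,\rho^{\Gamma r+(1-\Gamma)s}
\le \big(\max\{[u]_{r,\infty};[v]_{s,\infty}\}\big)\,\rho^{\Gamma r+(1-\Gamma)s},
$$
which rearranges to the desired inequality. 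Combining the two bounds gives the lemma.

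**Main obstacle.** The routine parts are the Hölder-seminorm computations for cones and the distance-to-boundary estimates; these are standard. The one genuinely delicate point is the upper-bound construction: making the test pair produce \emph{exactly} the exponent $(1-\Gamma)s+\Gamma r$ rather than merely $\min\{r,s\}$ or some other interpolation. The subtlety is that $[u]_{r,\infty}$ and $[v]_{s,\infty}$ scale differently in $\rho$, so a single cone cannot balance them; one needs either two cones with tuned exponents, or (more simply) to exploit that the supremum in $[u]_{r,\infty}$ over the support of a cone of radius $\rho$ scales as $\rho^{1-r}$ and in $[v]_{s,\infty}$ as $\rho^{1-s}$, and then note that the product norm also scales as $\rho$, so after taking the $\Gamma/(1-\Gamma)$ split of the numerator via the elementary bound $\max\{a;b\}\ge a^\Gamma b^{1-\Gamma}$ one recovers $\rho^{1-\Gamma r-(1-\Gamma)s}/\rho=\rho^{-(\Gamma r+(1-\Gamma)s)}$. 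I would verify that this $\max\{a;b\}\ge a^{\Gamma}b^{1-\Gamma}$ trick, applied in both directions, is what ties the variational and geometric quantities together — it is the conceptual heart of the argument and the place to be careful about whether the inequality can actually be made an equality by the cone, which it can, since for $u=v=(\rho-|x-x_0|)_+$ the two factors $|u(x_1)|,|v(x_1)|$ at the center $x_1=x_0$ are both equal to $\rho$ and the boundary distances are both exactly $\rho$.
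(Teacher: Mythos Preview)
Your lower bound argument is correct and in fact tidier than the paper's: you go straight from $|u(x_1)|\le [u]_{r,\infty}\rho^r$ and $|v(x_1)|\le [v]_{s,\infty}\rho^s$ to the product inequality via the weighted AM--GM bound $a^{\Gamma}b^{1-\Gamma}\le\max\{a,b\}$, whereas the paper optimizes $\max\{a/d^r,b/d^s\}$ under the constraint $a^{\Gamma}b^{1-\Gamma}=1$ before taking the infimum in $x_0$. Both give the same thing; yours is shorter.

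The upper bound, however, has a real gap. With the single cone $u=v=(\rho-|x-x_0|)_+$ one gets (as you compute) $[u]_{r,\infty}=\rho^{1-r}$, $[v]_{s,\infty}=\rho^{1-s}$ and $\|u^{\Gamma}v^{1-\Gamma}\|_{\infty}=\rho$, so the quotient is $\max\{\rho^{-r},\rho^{-s}\}$, which is strictly larger than $\rho^{-(\Gamma r+(1-\Gamma)s)}$ whenever $r\neq s$. Your attempt to repair this ``more simply'' via $\max\{a,b\}\ge a^{\Gamma}b^{1-\Gamma}$ goes the wrong way: that inequality gives a \emph{lower} bound on the numerator, hence a lower bound on the quotient, so it cannot produce the needed upper bound on $\Lambda_{1,\infty}$. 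Your ``tuned exponents'' idea $u=(\rho-|x-x_0|)_+^{a}$, $v=(\rho-|x-x_0|)_+^{b}$ with $a-r=b-s$ points in the right direction but is left as a heuristic (``$[u]_{r,\infty}\sim\rho^{a-r}$'') with constants untracked.

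What the paper actually does is pick the exponents exactly at the critical H\"older order and then rescale by scalars. With $x_0$ a point realizing $R(\Omega)=\rho$, set
\[
u_0(x)=\rho^{(r-s)(1-\Gamma)}\Bigl(1-\tfrac{|x-x_0|}{\rho}\Bigr)_+^{r},
\qquad
v_0(x)=\rho^{-(r-s)\Gamma}\Bigl(1-\tfrac{|x-x_0|}{\rho}\Bigr)_+^{s}.
\]
Because $t\mapsto t^{r}$ has $r$-H\"older seminorm exactly $1$ on $[0,1]$, one computes $[u_0]_{r,\infty}=\rho^{(r-s)(1-\Gamma)-r}=\rho^{-(\Gamma r+(1-\Gamma)s)}$ and likewise $[v_0]_{s,\infty}=\rho^{-(\Gamma r+(1-\Gamma)s)}$; the scalar prefactors are chosen precisely so that these two numbers coincide and so that $\|u_0^{\Gamma}v_0^{1-\Gamma}\|_{\infty}=1$. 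This gives the sharp upper bound. The missing ingredient in your proposal is this explicit balanced pair; once you have it, the rest of your outline is fine.
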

	\begin{proof}
		Let us take $(u,v)$ a minimizer for $\Lambda_{1,\infty}$
		with $u,v\ge0$ normalized according to $\| u^{\Gamma} v^{1-\Gamma} \|_{L^\infty (\Omega)}=1.$
		Therefore, there is a point $x_0 \in \Omega$ such that
		$$
			u^{\Gamma} (x_0) v^{1-\Gamma} (x_0) =1.
		$$
		Let us call 
		$$
			a= u (x_0)\qquad \mbox{and} \qquad b= v (x_0).
		$$
		Then, since $u,v=0$ in $\Omega^c$,
		$$
			[u]_{r,\infty} = \sup_{(x,y)\in \overline{\Omega}} \frac{| u(y) - u(x)|}{|x-y|^{r}} \geq 
			\frac{a}{[\dist (x_0, \partial 
			\Omega)]^r}
		$$
		and
		$$
			[v]_{s,\infty} = \sup_{(x,y)\in\overline{\Omega}} 
			\frac{| v(y) - v(x)|}{|x-y|^{s}} \geq \frac{b}{[\dist (x_0, \partial \Omega)]^s}.
		$$
		Therefore, we are left with
		$$
			\Lambda_{1,\infty}  \geq 
			\inf_{ (a,b,x_0)\in \mathcal{A}} \left\{
			\max\left\{ \frac{a}{[\dist (x_0, \partial \Omega)]^r} ; \frac{b}{[\dist (x_0, \partial \Omega)]^s}
			\right\} 
			\right\},  
		$$
		where
		\[
			\mathcal{A}\coloneqq\{(0,\infty)\times(0,\infty)\times\overline{\Omega}
						\colon a^{\Gamma}  b^{1-\Gamma} =1\} .
		\]
		
		To compute the infimum we observe that we must have
		$$
			\frac{a}{[\dist (x_0, \partial \Omega)]^r} 
			= \frac{b}{[\dist (x_0, \partial \Omega)]^s}
		$$
		that is,
		$$
			a= b [\dist (x_0, \partial \Omega)]^{r-s}.
		$$
		Then, using $a^{\Gamma}  b^{1-\Gamma} =1$, we obtain
		$$
			b [\dist (x_0, \partial \Omega)]^{\Gamma (r-s)} =1.
		$$
		Hence
		$$
			b = [\dist (x_0, \partial \Omega)]^{\Gamma (s-r)} 
		$$
		and
		$$
			a= [\dist (x_0, \partial \Omega)]^{(r-s) (1-\Gamma)}.
		$$
		Therefore, we are left with
		$$
			\inf_{x_0} [\dist (x_0, \partial \Omega)]^{- [ (1-\Gamma) s + \Gamma r ]},
		$$
		that is attained at a point $x_0\in \Omega$ 
		that maximizes the distance to the boundary. That is, letting
		$$
			R(\Omega) = \dist (x_0, \partial \Omega),
		$$
		we obtain that
		$$
			\Lambda_{1,\infty} \geq \left[ \frac{1}{R(\Omega)} \right]^{ (1-\Gamma) s + \Gamma r }.
		$$
		
		To end the proof, we need to show the reverse inequality. As before, let $x_0\in \Omega$
		be the point where is attained the maximum distance to the boundary.	Set
		\begin{align*}
			u_0(x)&=R(\Omega)^{(r-s)(1-\Gamma)}\left(1-\dfrac{|x-x_0|}{R(\Omega)}\right)_+^{r},\\
			v_0(x)&=R(\Omega)^{-(r-s)\Gamma}\left(1-\dfrac{|x-x_0|}{R(\Omega)}\right)_+^{s}.
		\end{align*}
		We can observe that $(u_0,v_0)\in C^r(\R^N)\times C^s(\R^N),$ $\|u^{\Gamma}_0
		v^{1-\Gamma}_0 \|_{L^{\infty}(\Omega)}=1$
		and
		\[
			\max\{[u_0 ]_{r,\infty};[v_0 ]_{s,\infty}\}\le \left[ \frac{1}{R(\Omega)} \right]^{ (1-\Gamma) s + \Gamma r }.
		\]
		Therefore
		$$
			\Lambda_{1,\infty} =  \inf \left\{
			\frac{\max \{ [u]_{r,\infty} ; [v]_{s,\infty} \} }{ \| |u|^{\Gamma} |v|^{1-\Gamma} \|_{L^\infty (\Omega)} }
			\colon (u,v)\in \wrsi(\Omega)\right\} \leq 
			\left[ \frac{1}{R(\Omega)} \right]^{ (1-\Gamma) s + \Gamma r }.
		$$
	\end{proof}
	
	\begin{re} Observe that $(u_0,v_0)$ is a minimizer of $\Lambda_{1,\infty}.$
	\end{re}
	
	\subsection{Convergence} Now, we prove that there exists a sequence of
	eigen-pairs $(u_p,v_p)$ associated to $\lambda_{1,p}$ such that 
	$(u_p,v_p)\to(u,v)$ as $p\to \infty$ and $(u,v)$ is a minimizer for
	$\Lambda_{1,\infty}.$
	\begin{lem} \label{lema.conv.unif.autov} 
		Let $(u_{p},v_{p})$ be an 
		eigen-pair for $\lambda_{1,p}$ such that $u_p$ and $v_p$ are positive and 
		$|(u,v)|_{\alpha,\beta}=1$. Then, there exists a sequence 
		$p_j \to \infty$ such that
		\[
			(u_{p_j},v_{p_j}) \to (u_\infty,v_\infty)
		\]
		uniformly in ${\mathbb{R}}^N$.  The limit $(u_\infty,v_\infty)$ belongs to the space 
		$\wrsi(\Omega)$ and is a minimizer of $\Lambda_{1,\infty}.$
		In addition, it holds that
		$$
			[\lambda_{1,p}]^{\nicefrac{1}{p}} \to \Lambda_{1,\infty}.
		$$
	\end{lem}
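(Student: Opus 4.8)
The plan is to trap $[\lambda_{1,p}]^{\nicefrac{1}{p}}$ between $\Lambda_{1,\infty}$ from above and from below, extracting a uniformly convergent subsequence of eigen-pairs along the way. For the upper bound I would use as a competitor in \eqref{eq:1eraut} the Lipschitz cone adapted to $\Lambda_{1,\infty}$: fixing $x_0\in\Omega$ with $\dist(x_0,\partial\Omega)=R(\Omega)$, put
\[
	\hat u_0(x)=R(\Omega)^{(r-s)(1-\Gamma)}\left(1-\dfrac{|x-x_0|}{R(\Omega)}\right)_+ ,
	\qquad
	\hat v_0(x)=R(\Omega)^{(s-r)\Gamma}\left(1-\dfrac{|x-x_0|}{R(\Omega)}\right)_+ .
\]
The same elementary computation as in Lemma \ref{lema:caracgeom} gives $[\hat u_0]_{r,\infty}=[\hat v_0]_{s,\infty}=\Lambda_{1,\infty}$ and $\|\hat u_0^{\Gamma}\hat v_0^{1-\Gamma}\|_{L^\infty(\Omega)}=1$, so $(\hat u_0,\hat v_0)$ is a minimizer of $\Lambda_{1,\infty}$; being Lipschitz with compact support it also lies in $\wrsp(\Omega)$ for every $p$. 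Next I would check that $[\hat u_0]_{r,p}\to[\hat u_0]_{r,\infty}$ and $[\hat v_0]_{s,p}\to[\hat v_0]_{s,\infty}$ by splitting the Gagliardo integral according to whether $|x-y|$ is small, moderate or large, using respectively the Lipschitz bound near the diagonal, the sharp bound $|\hat u_0(x)-\hat u_0(y)|\le\Lambda_{1,\infty}|x-y|^{r}$ valid on all of $\R^N$, and the vanishing of $\hat u_0$ on $\partial B_{R(\Omega)}(x_0)$; the point is that each super-level set $\{(x,y):|\hat u_0(x)-\hat u_0(y)|>\varepsilon|x-y|^{r}\}$ has finite $|x-y|^{-N}\,dx\,dy$--measure. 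Since $\alpha_p/p\to\Gamma$ and $\beta_p/p\to1-\Gamma$, the functions $\hat u_0^{\alpha_p/p}\hat v_0^{\beta_p/p}$ converge uniformly on $\Omega$ to $\hat u_0^{\Gamma}\hat v_0^{1-\Gamma}$, so $\big(\int_\Omega\hat u_0^{\alpha_p}\hat v_0^{\beta_p}\big)^{\nicefrac{1}{p}}=\|\hat u_0^{\alpha_p/p}\hat v_0^{\beta_p/p}\|_{L^p(\Omega)}\to\|\hat u_0^{\Gamma}\hat v_0^{1-\Gamma}\|_{L^\infty(\Omega)}=1$. Plugging $(\hat u_0,\hat v_0)$ into \eqref{eq:1eraut} and taking $p$-th roots gives $\limsup_{p\to\infty}[\lambda_{1,p}]^{\nicefrac{1}{p}}\le\Lambda_{1,\infty}$.

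For compactness, the normalization $|(u_p,v_p)|_{\alpha,\beta}=1$ gives $[u_p]_{r,p}^p+[v_p]_{s,p}^p=\lambda_{1,p}$, hence $[u_p]_{r,p},[v_p]_{s,p}\le[\lambda_{1,p}]^{\nicefrac{1}{p}}$, which is bounded for large $p$ by the previous step. I would fix $q_0>2N/\min\{r,s\}$; for $p>q_0$, Lemma \ref{lem:inclusion} together with $|u_p|_{r,p}\le[u_p]_{r,p}$ bounds $|u_p|_{r-\nicefrac{N}{q_0},q_0}$ by $\text{diam}(\Omega)^{\nicefrac{N}{p}}|\Omega|^{\nicefrac{2}{q_0}-\nicefrac{2}{p}}[\lambda_{1,p}]^{\nicefrac{1}{p}}$, while Lemma \ref{lem:poincare} and Hölder's inequality bound $\|u_p\|_{L^{q_0}(\Omega)}$ by a constant times $[\lambda_{1,p}]^{\nicefrac{1}{p}}$, all constants staying bounded as $p\to\infty$. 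Thus $\{u_p\}$ is bounded in $W^{r-\nicefrac{N}{q_0},q_0}(\Omega)$, and since $(r-\nicefrac{N}{q_0})q_0>N$ the fractional Morrey embedding (cf.\ \cite{DD}) gives a uniform bound in $C^{0,\tau}(\overline{\Omega})$ with $\tau=r-2N/q_0>0$; similarly for $\{v_p\}$ with exponent $s-2N/q_0$. By Arzel\`a--Ascoli, along a subsequence $p_j\to\infty$ that I also choose so that $[\lambda_{1,p_j}]^{\nicefrac{1}{p_j}}\to L:=\liminf_{p\to\infty}[\lambda_{1,p}]^{\nicefrac{1}{p}}$, both $u_{p_j}\to u_\infty$ and $v_{p_j}\to v_\infty$ uniformly on $\overline{\Omega}$ (hence on $\R^N$, all functions vanishing in $\Omega^c$), with $u_\infty,v_\infty\in C_0(\overline{\Omega})$; and, as in the first paragraph, $\|u_\infty^{\Gamma}v_\infty^{1-\Gamma}\|_{L^\infty(\Omega)}=\lim_j|(u_{p_j},v_{p_j})|_{\alpha,\beta}=1$, so the limit is nontrivial.

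For the lower bound I would fix any $q>N/\min\{r,s\}$; for $p_j>q$, Lemma \ref{lem:inclusion} gives $|u_{p_j}|_{r-\nicefrac{N}{q},q}\le\text{diam}(\Omega)^{\nicefrac{N}{p_j}}|\Omega|^{\nicefrac{2}{q}-\nicefrac{2}{p_j}}[\lambda_{1,p_j}]^{\nicefrac{1}{p_j}}$, and since $u_{p_j}\to u_\infty$ pointwise, Fatou's lemma gives $|u_\infty|_{r-\nicefrac{N}{q},q}\le\liminf_j|u_{p_j}|_{r-\nicefrac{N}{q},q}\le|\Omega|^{\nicefrac{2}{q}}L$. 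Writing $|u_\infty|_{r-\nicefrac{N}{q},q}^q=\int_{\Omega^2}\big(|u_\infty(x)-u_\infty(y)|/|x-y|^{r}\big)^q|x-y|^{N}\,dx\,dy$ and letting $q\to\infty$ in the finite measure space $(\Omega^2,|x-y|^{N}dx\,dy)$ yields $[u_\infty]_{r,\infty}\le L$ (continuity of $u_\infty$ converts the essential supremum into the supremum); likewise $[v_\infty]_{s,\infty}\le L$. Hence $(u_\infty,v_\infty)\in\wrsi(\Omega)$, and since $\|u_\infty^{\Gamma}v_\infty^{1-\Gamma}\|_{L^\infty(\Omega)}=1$, the definition of $\Lambda_{1,\infty}$ as an infimum gives
\[
	\Lambda_{1,\infty}\le\max\{[u_\infty]_{r,\infty};[v_\infty]_{s,\infty}\}\le L=\liminf_{p\to\infty}[\lambda_{1,p}]^{\nicefrac{1}{p}}\le\limsup_{p\to\infty}[\lambda_{1,p}]^{\nicefrac{1}{p}}\le\Lambda_{1,\infty}.
\]
All of these quantities therefore coincide, so $[\lambda_{1,p}]^{\nicefrac{1}{p}}\to\Lambda_{1,\infty}$ and $\max\{[u_\infty]_{r,\infty};[v_\infty]_{s,\infty}\}=\Lambda_{1,\infty}$, i.e.\ $(u_\infty,v_\infty)$ is a minimizer of $\Lambda_{1,\infty}$. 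The hard part will be the a priori estimates that need to be \emph{uniform in $p$}: this forces one to keep track of the $p$-dependence of the constants in Lemmas \ref{lem:poincare} and \ref{lem:inclusion} and in the Morrey embedding, decoupling it by passing through the fixed intermediate space $W^{r-\nicefrac{N}{q_0},q_0}$, and it is also what underlies the two $L^q\to L^\infty$ passages (for the competitor seminorms and for $[u_\infty]_{r,\infty},[v_\infty]_{s,\infty}$); granting these, the geometric identity of Lemma \ref{lema:caracgeom} makes the rest immediate.
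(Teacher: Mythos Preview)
Your plan is correct and follows the paper's proof almost step for step: an explicit competitor for the $\limsup$, the uniform-in-$p$ H\"older bounds via Lemma~\ref{lem:inclusion} (passing through a fixed $W^{t,q}$) for compactness, and Fatou plus $q\to\infty$ for the $\liminf$; the normalization $\|u_\infty^\Gamma v_\infty^{1-\Gamma}\|_{L^\infty}=1$ is obtained in the same way.

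The only genuine difference is your choice of competitor for the upper bound. The paper does not use the Lipschitz cones $(\hat u_0,\hat v_0)$; instead it takes the H\"older cones $(u_0,v_0)$ from Lemma~\ref{lema:caracgeom} (with exponents $r,s$ on the cone) and raises them to a power $\gamma>1$ with $\gamma\max\{r,s\}<1$, so that $(u_0^\gamma,v_0^\gamma)\in\wrsp(\Omega)$ for every $p$, then lets $\gamma\to 1$ after $p\to\infty$. Your Lipschitz choice buys exactly the same thing---extra regularity over $C^r$ so that the near-diagonal part of the Gagliardo integral is tame---and your super-level-set argument for $\limsup_p[\hat u_0]_{r,p}\le[\hat u_0]_{r,\infty}$ is valid (Lipschitz gives $h(x,y)\lesssim|x-y|^{1-r}$ near the diagonal, compact support controls the tail). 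The paper's $\gamma$-trick is slightly slicker in that it reduces the computation of $[u_\gamma]_{r,\infty}$ to an explicit formula in $\gamma$ and then a one-line limit $\gamma\to1$, whereas you have to verify directly that $[\hat u_0]_{r,\infty}=\Lambda_{1,\infty}$ (which is true, the sup being attained between the apex $x_0$ and $\partial B_{R(\Omega)}(x_0)$).

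One small slip: in your lower-bound paragraph the identity should read
\[
|u_\infty|_{r-N/q,\,q}^q=\int_{\Omega^2}\Big(\tfrac{|u_\infty(x)-u_\infty(y)|}{|x-y|^{r}}\Big)^{q}\,dx\,dy,
\]
i.e.\ the reference measure is plain Lebesgue on $\Omega^2$ (since $N+(r-N/q)q=rq$), not $|x-y|^{N}dx\,dy$. This only helps you, as $\Omega^2$ already has finite measure and the $L^q\to L^\infty$ passage is then immediate.
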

	
	\begin{proof}
		We start showing that
		\begin{equation}\label{eq:limt1}
			\limsup_{p\to\infty}[\lambda_{1,p}]^{
			\nicefrac1p}\le \Lambda_{1,\infty}.
		\end{equation}		
		Let $\gamma>1$ be such that $\gamma\max\{r,s\}<1.$ 
		Then $(u_\gamma,v_\gamma)=(u_\infty^\gamma,v_\infty^\gamma)\in\wrsp(\Omega)\cap\wrsi (\Omega)$ for all $p>1.$
		Thus
		\[
			[\lambda_{1,p}]^{\nicefrac{1}{p}}\le \dfrac{\left([u_\gamma]_{r,p}^p+[v_\gamma]_{s,p}^p\right)^{\nicefrac{1}{p}}}{|(u_\gamma,
			v_\gamma)|_{\alpha,\beta}}
		\]  
		for all $p>1.$ In addition, we observe that $\|u^{\Gamma}_\gamma v^{1-\Gamma}_\gamma \|_{L^{\infty}(\Omega)}=1.$ Then
		\begin{align*}
			\limsup_{p\to\infty}
			[\lambda_{1,p}]^{\nicefrac{1}{p}}&\le 
			\max\left\{[u_\gamma]_{r,\infty};[v_\gamma]_{s,\infty}\right\}\\
			&\le \max\left\{2^{r(\gamma-1)}R(\Omega)^{\gamma(r-s)(1-\Gamma)-r};
				2^{s(\gamma-1)}R(\Omega)^{-\gamma(r-s)\Gamma-s}
			\right\}.
		\end{align*}				 
		Therefore, passing to the limit as $\gamma\to 1$ in the previous inequality and using Lemma \ref{lema:caracgeom},	
		we get \eqref{eq:limt1}.

		Our next step is to show that
		\begin{equation}\label{eq:limt2}
			\Lambda_{1,\infty} \le
			\liminf_{p\to\infty}[\lambda_{1,p}]^{\nicefrac1{p}}.
		\end{equation}
		Let $p_j>1$ be such that
		\begin{equation}\label{eq:inf}
			\liminf_{p\to\infty}[\lambda_{1,p}]^{\nicefrac1{p}}
			=\lim_{j\to\infty}[\lambda_{j}]^{\nicefrac1{p_j}},
		\end{equation}
		where $\lambda_j=\lambda_{1,p_j}.$ By \eqref{eq:limt1}, 
		without of loss of generality, we can assume  
		$$
			2\max\{\nicefrac{N}r,\nicefrac{N}s\}< p_1,\quad p_j\le p_{j+1},\quad  \text{and }
		$$
		\begin{equation}\label{eq:vie}
			[\lambda_{j}]^{\nicefrac1{p_j}}
			=\left([u_{j}]_{r,p_j}^{p_j}+[v_{j}]_{s,p_j}^{p_j}\right)^{\nicefrac{1}{p_j}}
			\le \Lambda_{1,\infty} + \varepsilon \qquad \forall j\in\mathbb{N},
		\end{equation}
		where $\varepsilon$ is any positive number and $(u_j,v_j)$ is an eigen-pair corresponding to $\lambda_{j}$ 
		normalized according to $|(u_{j},v_j)|_{\alpha_j,\beta_j}=1$ ($\alpha_j=\alpha_{p_j},$ 
		$\beta_j=\beta_{p_j}$)  and such that $u_j,v_j>0$ in $\Omega.$  
	
		Let $q\in(2\max\{\nicefrac{N}r,\nicefrac{N}s\}, p_1),$ $t_1=r-\nicefrac{N}{q}$ and $t_2=s-\nicefrac{N}{q}.$ 
		It follows from \eqref{eq:vie} and Lemmas \ref{lem:poincare} and \ref{lem:inclusion} that 
		$\{u_j\}$ and $\{v_j\}$ are bounded in $W^{t_1,q}(\Omega)$ and $W^{t_2,q}(\Omega),$ respectively.
		Since $q\min\{t_1,t_2\}\ge N,$ taking a subsequence if is necessary, we get
		\begin{align*}
			u_j\to u_\infty & \text{ strongly in } C^{0,\gamma_1}(\overline{\Omega}),\\
			v_j\to v_\infty & \text{ strongly in } C^{0,\gamma_2}(\overline{\Omega}).
		\end{align*}				
		due to the compact Sobolev embedding theorem. Here $0<\gamma_1<t_1-\nicefrac{N}q=r-2\nicefrac{N}{q}$
		and $0<\gamma_1<t_2-\nicefrac{N}q=s-2\nicefrac{N}{q}.$ Therefore $u_\infty=v_\infty=0$ on $\partial\Omega.$		
		
		On the other hand, by Lemma \ref{lem:inclusion},
		\begin{align*}
				|u_j|_{t_1,q}&\le
				{\rm{diam}}(\Omega)^{\nicefrac{N}{p_j}}|\Omega|^{\nicefrac2q-\nicefrac2{p_j}}|u_j|_{r,p_j}
				\le {\rm{diam}}(\Omega)^{\nicefrac{N}{p_j}}|\Omega|^{\nicefrac2q-\nicefrac2{p_j}}
				[\lambda_{j}]^{\nicefrac1{p_j}},\\
				|v_j|_{t_2,q}&\le
				{\rm{diam}}(\Omega)^{\nicefrac{N}{p_j}}|\Omega|^{\nicefrac2q-\nicefrac2{p_j}}|v_j|_{s,p_j}
				\le  {\rm{diam}}(\Omega)^{\nicefrac{N}{p_j}}|\Omega|^{\nicefrac2q-\nicefrac2{p_j}}
				[\lambda_{j}]^{\nicefrac1{p_j}}.
		\end{align*}				
		Then passing to the limit as $j\to\infty$ and using Fatou's lemma, we get 
		$(u_\infty,v_\infty)\in W^{t_1,q}(\Omega)\times W^{t_2,q}(\Omega)$
		and
		\begin{align*}
				|u_\infty|_{t_1,q}&\le |\Omega|^{\nicefrac2q} \liminf_{p\to\infty}[\lambda_{1,p}]^{\nicefrac1p},\\
				|v_\infty|_{t_2,q}&\le |\Omega|^{\nicefrac2q}\liminf_{p\to\infty}[\lambda_{1,p}]^{\nicefrac1p}.
		\end{align*}				
		Now passing to the limit as $q\to\infty$ we obtain
		\begin{align*}
			[u_\infty]_{r,\infty}\le \liminf_{p\to\infty}[\lambda_{1,p}]^{\nicefrac1p},\\
			[v_\infty]_{s,\infty}\le \liminf_{p\to\infty}[\lambda_{1,p}]^{\nicefrac1p},
		\end{align*}
		that is $(u_\infty,v_\infty)\in\wrsi(\Omega)$ and 
		\begin{equation}\label{eq:yaestamos}
			\max\{[u_\infty]_{r,\infty};[v_\infty]_{r,\infty}\}\le \liminf_{p\to\infty}[\lambda_{1,p}]^{\nicefrac1p}.		
		\end{equation}
	
		To end the proof we only need to show that $\|u_\infty^{\Gamma}v_\infty^{1-\Gamma}\|_{L^{\infty}(\Omega)}=1.$
		For all $q>1$ there exists $j_0\in\mathbb{N}$ such that
		$p_j>q$ if $j>j_0$ and therefore, by Fatou's Lemma and H\"older's inequality, 
		 we get 
		\[
			\|u^{\Gamma}_\infty v^{1-\Gamma}_\infty\|_{L^q(\Omega)}^q\le
			\liminf_{j\to\infty}\int_{\Omega} u_j^{{\nicefrac{\alpha_j}{p_j}}q} v_j^{{\nicefrac{\beta_j}{p_j}}q}dx
			\le \liminf_{j\to\infty} |\Omega|^{1-\frac{q}{p_j}}=1
		\]
		due to $|(u_j,v_j)|_{\alpha_j,\beta_j}=1.$
		Then passing to the limit as $q\to\infty$ we have 
		\[
			\|u_\infty^{\Gamma}v_\infty^{1-\Gamma}\|_{L^{\infty}(\Omega)}\le 1.
		\]
		
		On the other hand
		\[
		 	1=|(u_j,v_j)|_{\alpha_j,\beta_j}^{\nicefrac{1}{p_j}}\le 
		 	\|u_j^{\nicefrac{\alpha_j}{p_j}}v_j^{\nicefrac{\beta_j}{p_j}}\|_{L^{\infty}(\Omega)}|\Omega|^{\nicefrac1{p_j}}\to
		 	\|u_\infty^{\Gamma}v_\infty^{1-\Gamma}\|_{L^{\infty}(\Omega)}.
		\]
		Therefore $\|u_\infty^{\Gamma}v_\infty^{1-\Gamma}\|_{L^{\infty}(\Omega)}=1.$
	\end{proof}
	
	\subsection{Viscosity Solution}
		Finally we will show that $(u_\infty,v_\infty)$ is a viscosity solution of 
		\begin{equation}\label{eq:limite}
			\begin{cases}
				\min\left\{\mathcal{L}_{r,\infty}u(x);\mathcal{L}_{r,\infty}^+u(x)-\Lambda_{1,\infty} u^{\Gamma}(x) v^{1-\Gamma}(x)\right\}
				=0
				&\text{ in } \Omega,\\
				\min\left\{\mathcal{L}_{s,\infty}u(x);\mathcal{L}_{s,\infty}^+u(x)-\Lambda_{1,\infty} u^{\Gamma}(x) v^{1-\Gamma}(x)\right\}
				=0&\text{ in } \Omega,\\
				u=v=0 &\text{ in } \R^N\setminus\Omega,
			\end{cases}
		\end{equation}
		where 
		\[
			\mathcal{L}_{t,\infty}w(x)=\mathcal{L}_{t,\infty}^+w(x)
			+\mathcal{L}_{r,\infty}^-w(x)= \sup_{y\in\R^N}\dfrac{w(x)-w(y)}{|x-y|^{t}}
			+\inf_{y\in\R^N}\dfrac{w(x)-w(y)}{|x-y|^{t}}.
		\]
				
		Let us introduce the precise definition of viscosity solution of \eqref{eq:limite}.

		\medskip
		
		\noindent{\bf Definition.} Let $(u,v) \in C(\R^N)\times C(\R^N)$ be such that $u,v\ge0$ in $\Omega$
		and $u=v=0$ in $\Omega^c.$ 
		
		 We say that $(u,v)$ is a viscosity subsolution
				of \eqref{eq:limite} at a point $x_0\in \Omega$ if and only if
				for any test pair $(\varphi,\psi)\in C^2_0(\R^N)\times C^2_0(\R^N)$ 
				such that 	$u(x_0)=\varphi(x_0),$ $v(x_0)=\psi(x_0),$ 
				$u(x)\le\varphi(x)$	 and $v(x)\le\psi(x)$ for all $x\in\R^N$ we have that
				\begin{align*}
					&\min\{\mathcal{L}_{r,\infty}\varphi(x_0); \mathcal{L}_{r,\infty}^+\varphi(x_0)
					-\Lambda_{1,\infty}u^{\Gamma}(x_0)v^{1-\Gamma}(x_0)\} \le0,\\
					&\min\{\mathcal{L}_{r,\infty}\psi(x_0); \mathcal{L}_{r,\infty}^+\psi(x_0)
					-\Lambda_{1,\infty}u^{\Gamma}(x_0)v^{1-\Gamma}(x_0)\} \le0
				\end{align*}
			
			We say that $(u,v)$ is a viscosity subsolution
				of \eqref{eq:limite} at a point $x_0\in \Omega$ if and only if
				for any test pair $(\varphi,\psi)\in C^2_0(\R^N)\times C^2_0(\R^N)$ 
				such that 	$u(x_0)=\varphi(x_0),$ $v(x_0)=\psi(x_0),$ 
				$u(x)\ge\varphi(x)$	 and $v(x)\ge\psi(x)$ for all $x\in\R^N$ we have that
				\begin{align*}
					&\min\{\mathcal{L}_{r,\infty}\varphi(x_0); \mathcal{L}_{r,\infty}^+\varphi(x_0)
					-\Lambda_{1,\infty}u^{\Gamma}(x_0)v^{1-\Gamma}(x_0)\} \ge0,\\
					&\min\{\mathcal{L}_{r,\infty}\psi(x_0); \mathcal{L}_{r,\infty}^+\psi(x_0)
					-\Lambda_{1,\infty}u^{\Gamma}(x_0)v^{1-\Gamma}(x_0)\} \ge0
				\end{align*}
			
			Finally, $u$ is a viscosity solution
				of \eqref{eq:limite} at a point $x_0\in \Omega$ viscosity solution, if it is both a viscosity super- and subsolution at every $x_0$.
				
				\medskip

		\begin{lem}
		$(u_\infty,v_\infty)$ is a viscosity solution of \eqref{eq:limite}.		
	\end{lem}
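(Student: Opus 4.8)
\emph{Proof strategy.} The plan is to pass to the limit as $p_j\to\infty$ in the viscosity equations satisfied by the components of the eigen-pairs $(u_{p_j},v_{p_j})$, which are provided by Corollary~\ref{cor:autovisco}. Because the two lines of \eqref{eq:limite} are interchanged by $(r,\alpha)\leftrightarrow(s,\beta)$, and because the first line of the definition of viscosity sub/supersolution involves only the test function touching $u$ together with the number $v(x_0)=v_\infty(x_0)$, it suffices to show that $u_\infty$ is a viscosity solution of $\min\{\mathcal{L}_{r,\infty}u;\,\mathcal{L}_{r,\infty}^{+}u-g\}=0$ in $\Omega$, where $g(x):=\Lambda_{1,\infty}\,u_\infty(x)^{\Gamma}v_\infty(x)^{1-\Gamma}\in C(\Omega)$; then $v_\infty$ is handled identically. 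We describe the supersolution part. Fix $x_0\in\Omega$ and $\varphi\in C^2_0(\R^N)$ with $\varphi\le u_\infty$ and $\varphi(x_0)=u_\infty(x_0)$; after a standard bounded $C^2$ perturbation vanishing only at $x_0$ we may assume the touching is strict near $x_0$. Thus we must prove: (a)~$\mathcal{L}_{r,\infty}\varphi(x_0)\ge 0$, and (b)~$\mathcal{L}_{r,\infty}^{+}\varphi(x_0)\ge g(x_0)$. By the uniform convergence $u_{p_j}\to u_\infty$ (Lemma~\ref{lema.conv.unif.autov}) there are $x_j\to x_0$ at which $u_{p_j}-\varphi$ has a local minimum. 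Fix $\delta>0$ small and let $w_j$ equal $\varphi+(u_{p_j}(x_j)-\varphi(x_j))$ on $B_\delta(x_j)$ and $u_{p_j}$ on $\R^N\setminus B_\delta(x_j)$; by Corollary~\ref{cor:autovisco} and the standard equivalence of the viscosity formulation of nonlocal equations with its localized version (cf.\ the proof of Theorem~\ref{teo:debilvisco} and \cite{LL}), $w_j$ may be used as a test function, so
\[
2\int_{\R^N}\frac{|w_j(x_j)-w_j(y)|^{p_j-2}(w_j(x_j)-w_j(y))}{|x_j-y|^{N+rp_j}}\,dy
\ \ge\ \lambda_{1,p_j}\,\frac{\alpha_j}{p_j}\,u_{p_j}(x_j)^{\alpha_j-1}v_{p_j}(x_j)^{\beta_j}.
\]
Writing the kernel as $\bigl((w_j(x_j)-w_j(y))/|x_j-y|^{r}\bigr)^{p_j-1}|x_j-y|^{-N}$ and separating the regions $\{w_j(y)<w_j(x_j)\}$ and $\{w_j(y)>w_j(x_j)\}$, this reads $I_j^{+}-I_j^{-}\ge\tfrac12\lambda_{1,p_j}\tfrac{\alpha_j}{p_j}u_{p_j}(x_j)^{\alpha_j-1}v_{p_j}(x_j)^{\beta_j}\ge0$, with $I_j^{\pm}\ge0$.

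The heart of the proof is to control $(I_j^{\pm})^{1/(p_j-1)}$, an $L^{p}$--$L^\infty$ limit against the weight $|x_j-y|^{-N}\,dy$, which has infinite total mass; accordingly one splits $\R^N$ into $B_\delta(x_j)$, $\Omega\setminus B_\delta(x_j)$ and $\Omega^c$. On $B_\delta(x_j)$ one has $w_j(x_j)-w_j(y)=\varphi(x_j)-\varphi(y)$, so the increment is $O(|x_j-y|^{1-r})$ by the smoothness of $\varphi$, and the corresponding part of $(I_j^{\pm})^{1/(p_j-1)}$ is, for $j$ large, at most $C_\delta\,\delta^{1-r}$ with $C_\delta\to|\nabla\varphi(x_0)|$ as $\delta\to0$. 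On $\Omega\setminus B_\delta(x_j)$ the weight has finite mass $\omega_N\log(\mathrm{diam}(\Omega)/\delta)$ and $w_j=u_{p_j}\to u_\infty$ uniformly; on $\Omega^c$ one has $w_j\equiv0$, so that region contributes nothing to $I_j^{-}$ and its contribution to $I_j^{+}$ is, after taking roots, bounded by $u_\infty(x_0)/\dist(x_0,\partial\Omega)^{r}$, which is absorbed below. Using $\varphi\le u_\infty$ with equality at $x_0$ (and $\varphi\le0$ on $\Omega^c$ together with $\mathcal{L}_{r,\infty}^{+}\varphi(x_0)\ge0$ to absorb the $\Omega^c$ tail) one obtains
\[
\limsup_{j\to\infty}(I_j^{+})^{1/(p_j-1)}\le\mathcal{L}_{r,\infty}^{+}\varphi(x_0)+\varepsilon(\delta),
\qquad
\liminf_{j\to\infty}(I_j^{-})^{1/(p_j-1)}\ge -\mathcal{L}_{r,\infty}^{-}\varphi(x_0)-\varepsilon(\delta),
\]
with $\varepsilon(\delta)\to0$ as $\delta\to0$; the second bound uses that $I_j^{-}$ dominates its contribution over a small ball around a near-maximizer of $y\mapsto(u_\infty(y)-u_\infty(x_0))/|x_0-y|^{r}$ in $\Omega$.

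Taking $(p_j-1)$-th roots in $I_j^{+}\ge I_j^{-}$, then letting $j\to\infty$ and $\delta\to0$, gives $\mathcal{L}_{r,\infty}^{+}\varphi(x_0)\ge-\mathcal{L}_{r,\infty}^{-}\varphi(x_0)$, i.e.\ (a). For (b), from $I_j^{+}\ge\tfrac12\lambda_{1,p_j}\tfrac{\alpha_j}{p_j}u_{p_j}(x_j)^{\alpha_j-1}v_{p_j}(x_j)^{\beta_j}$ we take $(p_j-1)$-th roots and use $[\lambda_{1,p_j}]^{1/p_j}\to\Lambda_{1,\infty}$ (Lemma~\ref{lema.conv.unif.autov}), $\alpha_j/p_j\to\Gamma$, $(\alpha_j-1)/(p_j-1)\to\Gamma$, $\beta_j/(p_j-1)\to1-\Gamma$, and $u_{p_j}(x_j)\to u_\infty(x_0)\ge0$, $v_{p_j}(x_j)\to v_\infty(x_0)\ge0$, to see that the right-hand side raised to $1/(p_j-1)$ converges to $g(x_0)$; combined with the $\limsup$ bound and $\delta\to0$ this yields $\mathcal{L}_{r,\infty}^{+}\varphi(x_0)\ge g(x_0)$. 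Hence (a) and (b) hold, which is the supersolution property. The subsolution property follows by testing from above, reversing all inequalities, and arguing by contradiction: if both $\mathcal{L}_{r,\infty}\varphi(x_0)>0$ and $\mathcal{L}_{r,\infty}^{+}\varphi(x_0)>g(x_0)$, then the first strict inequality separates the limits of $(I_j^{+})^{1/(p_j-1)}$ and $(I_j^{-})^{1/(p_j-1)}$, so $I_j^{+}-I_j^{-}\ge\tfrac12\bigl(\mathcal{L}_{r,\infty}^{+}\varphi(x_0)\bigr)^{p_j-1}$ for $j$ large; taking $(p_j-1)$-th roots in $I_j^{+}-I_j^{-}\le\tfrac12\lambda_{1,p_j}\tfrac{\alpha_j}{p_j}u_{p_j}(x_j)^{\alpha_j-1}v_{p_j}(x_j)^{\beta_j}$ then forces $\mathcal{L}_{r,\infty}^{+}\varphi(x_0)\le g(x_0)$, a contradiction. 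Running the same argument with $(r,\alpha)$ replaced by $(s,\beta)$ shows $v_\infty$ satisfies the second line of \eqref{eq:limite}, so $(u_\infty,v_\infty)$ is a viscosity solution of \eqref{eq:limite}.

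\emph{Main obstacle.} The principal difficulty is exactly the $L^{p}$--$L^\infty$ limit of $(I_j^{\pm})^{1/(p_j-1)}$ against the non-integrable weight $|x_j-y|^{-N}$: one must simultaneously control the contribution near the singularity $y=x_j$ (where the weight blows up but the $C^2$ increment of $\varphi$ vanishes like $|x_j-y|^{1-r}$), the contribution from $\Omega^c$ (where $w_j$ vanishes), and keep track of the one-sidedness of the touching, since the limits of the positive and negative pieces only match $\mathcal{L}_{r,\infty}^{\pm}\varphi(x_0)$ after the cut-off radius $\delta$ is sent to $0$. A secondary technical point is the equivalence, for these nonlocal operators, between the given viscosity formulation and the localized one permitting the test function to coincide with $u_{p_j}$ outside $B_\delta(x_j)$.
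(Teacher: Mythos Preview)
Your argument is correct and follows the same strategy as the paper's sketch (which in turn defers to \cite{LL}): test the viscosity inequality for $u_{p_j}$ at nearby points $x_j\to x_0$, split the nonlocal integral into its positive and negative parts, take $(p_j-1)$-th roots, and pass to the limit using $[\lambda_{1,p_j}]^{1/p_j}\to\Lambda_{1,\infty}$ and $\alpha_j/p_j\to\Gamma$. Your identification of the main obstacle---the $L^{p}\to L^{\infty}$ limit against the non-integrable weight $|x_j-y|^{-N}$, handled by splitting into a small ball (where the $C^2$ smoothness of $\varphi$ gives a $|x_j-y|^{1-r}$ gain), an annulus in $\Omega$, and $\Omega^c$---is exactly right and is what the paper suppresses by citing \cite{LL}.

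The one unnecessary complication in your write-up is the passage to the \emph{localized} test function $w_j$ (equal to the shifted $\varphi$ on $B_\delta(x_j)$ and to $u_{p_j}$ outside). The paper, like \cite{LL}, simply keeps the global test function $\varphi\in C^2_0(\R^N)$ throughout; then the quantities $A_j,B_j$ are integrals of $\varphi$ alone and their limits are directly $\mathcal{L}_{r,\infty}^{\pm}\varphi(x_0)$, with no need to compare $u_\infty$ back to $\varphi$ via the touching inequality or to invoke the equivalence of global and localized viscosity formulations. Your route works, but the global-$\varphi$ version is shorter and avoids that secondary technical point you flag at the end.
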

	
	\begin{proof}
	It follows as in \cite[Section 8]{LL}, we include a sketch here for completeness. 
	Let us show that $u_\infty$ is a viscosity supersolution of the first
	equation in \eqref{eq:limite} (the fact that it is a viscosity sub solution is similar). 
	Assume that $\varphi$ is a test function touching $u_\infty$ strictly from below at a point $x_0 \in \Omega$. 
	We have that $u_j -\varphi$ has a minimum at points $x_j \to x_0$. Since $u_j$ is a weak solution (and hence a viscosity solution) to the first equation in our system we have the inequality 
	$$
	- (-\Delta_{p_j})^r \varphi (x_j) + \lambda_{1,p_j}\dfrac{\alpha_j}{p_j} |\varphi|^{\alpha_j-2}\varphi |v|^{\beta_j} (x_j) \leq 0.
	$$
	Writing (as in \cite{LL}) 
$$
A_j^{p_j-1} = 2\int_{\R^N} \dfrac{|\varphi(x_j)-\varphi(y)|^{p_j-2}(\varphi(x_j)-\varphi(y))^+}{|x_j-y|^{N+sp_j}} \, dy, $$
$$
B_j^{p_j-1} =  2\int_{\R^N}\dfrac{|\varphi(x_j)-\varphi(y)|^{p_j-2}(\varphi(x_j)-\varphi(y))^-}{|x_j-y|^{N+sp_j}} \, dy $$
and
$$
C_j^{p_j-1} =  \lambda_{1,p_j}\dfrac{\alpha_j}{p_j} |\varphi|^{\alpha_j-2}\varphi |v|^{\beta_j} (x_j)
$$
we get 
$$
A_j^{p_j-1}  + C_j^{p_j-1}  \leq  B_j^{p_j-1} . 
$$	
Using that
$$
A_j \to \mathcal{L}_{r,\infty}^+\varphi(x_0),
 \qquad B_j \to - \mathcal{L}_{r,\infty}^-\varphi(x_0)
\qquad \mbox{and} \qquad C_j \to \Lambda_{1,\infty}u^{\Gamma}(x_0)v^{1-\Gamma}(x_0)
$$
we obtain
$$
\min\{\mathcal{L}_{r,\infty}\varphi(x_0); \mathcal{L}_{r,\infty}^+\varphi(x_0)
					-\Lambda_{1,\infty}u^{\Gamma}(x_0)v^{1-\Gamma}(x_0)\} \leq 0.
$$	
	\end{proof}
	

\bibliographystyle{amsplain}

\end{document}